\documentclass[a4paper,10pt,reqno]{amsart}
\usepackage{amsmath}
  \usepackage{paralist}
  \usepackage{graphics} 
  \usepackage{epsfig} 
\usepackage{graphicx}  \usepackage{epstopdf}
 \usepackage[colorlinks=true]{hyperref}
\hypersetup{urlcolor=blue, citecolor=red}

  \textheight=8.2 true in
   \textwidth=5.0 true in
    \topmargin 30pt
     \setcounter{page}{1}



\newtheorem{theorem}{Theorem}[section]

\newtheorem{lemma}[theorem]{Lemma}
\newtheorem{proposition}{Proposition}[section]

\theoremstyle{definition}
\newtheorem{definition}[theorem]{Definition}
\newtheorem{remark}{Remark}[section]

\newcommand{\spt}{\mathop\mathrm{spt}}
\renewcommand{\L}[1]{\mathbf{L^{#1}}}
\newcommand{\Lloc}[1]{\mathbf{L^{#1}_{loc}}}
\newcommand{\C}[1]{\mathbf{C^{#1}}}
\renewcommand{\d}{\mathrm{d}}
\newcommand{\R}{{\mathbb{R}}}
\newcommand{\N}{{\mathbb{N}}}
\usepackage[utf8]{inputenc}
\theoremstyle{definition}
\newtheorem{example}{Example}[section]
\newcommand{\Cc}[1]{\mathbf{C_c^{#1}}}

\newcommand{\tv}{\mathrm{TV}}
\newcommand{\BV}{\mathbf{BV}}
\usepackage{array}

\title[Particle approximation of the Hughes model]{Deterministic particle approximation of the Hughes model in one space dimension}

\author[M.~Di~Francesco and S.~Fagioli and M.~D.~Rosini and G.~Russo]{}

\subjclass{Primary: 35L65; Secondary: 90B20.}
 \keywords{Crowd dynamics, conservation laws, eikonal equation, Hughes’ model for pedestrian flows, particle approximation.}

 \email{marco.difrancesco@univaq.it}
 \email{simone.fagioli@univaq.it}
 \email{mrosini@umcs.lublin.pl}
 \email{russo@dmi.unict.it}


\thanks{$^*$ Corresponding author: Marco Di~Francesco}

\begin{document}
\allowdisplaybreaks
\maketitle

\centerline{\scshape Marco Di~Francesco$^*$and Simone Fagioli}
\medskip
{\footnotesize
\centerline{DISIM, Universit\`a degli Studi dell'Aquila,}
\centerline{via Vetoio 1 (Coppito), 67100 L’Aquila (AQ), Italy}
} 

\medskip

\centerline{\scshape Massimiliano~Daniele Rosini}
\medskip
{\footnotesize
\centerline{Instytut Matematyki, Uniwersytet Marii Curie-Sk\l odowskiej,}
\centerline{plac Marii Curie-Sk\l odowskiej 1, 20-031 Lublin, Poland}
}

\medskip

\centerline{\scshape Giovanni Russo}
\medskip
{\footnotesize
\centerline{Dipartimento di Matematica ed Informatica, Universit\`a di Catania,}
\centerline{Viale Andrea Doria 6, 95125 Catania, Italy}
}

\bigskip

\centerline{\emph{Dedicated to Professor Peter A. Markowich for his 60th birthday}}

\bigskip

 \centerline{(Communicated by the associate editor name)}

\begin{abstract}
In this paper we present a new approach to the solution to a generalized version of Hughes' models for pedestrian movements based on a follow-the-leader many particle approximation. In particular, we provide a rigorous global existence result under a smallness assumption on the initial data ensuring that the trace of the solution along the turning curve is zero for all positive times. We also focus briefly on the approximation procedure for symmetric data and Riemann type data. Two different numerical approaches are adopted for the simulation of the model, namely the proposed particle method and a Godunov type scheme. Several numerical tests are presented, which are in agreement with the theoretical prediction.
\end{abstract}

\section{Introduction}

%

The mathematical modeling of human crowds has gained significant attention in the last decades, see \cite{bellomo_bellouquid_2011, ColomboRosini2009, Helbing2007, Henderson1971, Hughes2002, Muramatsu00, Piccoli2009, rosini_book, Treuille2006}. In 2002, Roger L.~Hughes \cite{Hughes2002} proposed a `thinking fluid' approach to this subject, which consists in modeling a human crowd as a continuum medium, with Eulerian velocity computed via a nonlocal constitutive law of the overall distribution of pedestrians. Such a nonlocal dependence is encoded in a \emph{weighted} distance function computed at a quasi-equilibrium regime, obeying an eikonal equation with right-hand side depending on the density of pedestrians. In its multidimensional formulation, the model reads
\begin{align}\label{eq:hughes_multid}
&\rho_t - \mathrm{div}\left(\rho \, v(\rho) \, \frac{\nabla \phi}{\|\nabla \phi\|}\right) = 0\,,
&\|\nabla \phi\| = c(\rho)\,.
\end{align}
Here $x$ denotes the position variable, $t$ is the time, and $\rho = \rho(t,x) \ge0$ represents the averaged crowd density. The model is typically  posed on a bounded domain $x\in \Omega\subset \R^2$, with boundary conditions depending on the position of the exits in $\partial\Omega$, see \cite{Hughes2002,BurgerDiFrancescoMarkowichWolfram} for further details. The map $\rho\mapsto v(\rho)$ represents the absolute value of the velocity, and is assumed to be decreasing, as lower velocities correspond to higher densities. The function $\phi$ in \eqref{eq:hughes_multid} models the pedestrians perception of the exits, and is computed as a weighted distance encoding the overall distribution of the crowd. It may be interpreted as an estimated exit time for a given distribution of pedestrians. More precisely, pedestrians choose their direction upon evaluation of a \emph{running cost} function $\rho\mapsto c(\rho)$. Such a function is assumed to be increasing, since pedestrians choose their path toward the closest exit avoiding densely crowded regions.

Hughes' approach addresses specifically the movement of pedestrians in highly populated domains. In \cite{Hughes2003}, the model has been suggested in particular to provide assistance in the annual Muslim Hajj and, quoting from \cite{Hughes2003}, `to locate barriers that actually increase the flow of pedestrians above that when there are no barriers present'. Such a phenomenon, analogous to a similar one known in vehicular traffic modeling, is often referred to in the literature as \emph{Braess' paradox} for pedestrians, see \cite{braess, rosini_book}. Hughes' model is way far from being the only significant PDE-based approach to pedestrian movements, see e.g.~\cite{ColomboRosini2009, maury2010}. Compared to other existing models in the literature, in addition to its potential applications listed in \cite{Hughes2003}, this model has the undeniable advantage of combining classical overcrowding avoidance effects with singular \emph{direction-switching} phenomena, which occur typically in a densely crowded region with limited information on the location of the exits. Clearly, the global perception of the domain implied by the eikonal equation in \eqref{eq:hughes_multid} is not totally realistic in most situations. However, the model is very flexible and can be easily improved to localize the range of perception, see \cite{carrillo_martin_wolfram}. Another argument in favour of \eqref{eq:hughes_multid} has been recently provided in \cite{BurgerDiFrancescoMarkowichWolfram}, in which a full-transient version of this model has been justified as a mean field game.

In this paper we focus on the Hughes model \cite{Hughes2002} in its one-dimensional formulation
\begin{subequations}\label{eq:model}
\begin{align}\label{eq:hughes_continuum}
&\rho_t - \left[\rho \, v(\rho) \, \frac{\phi_x}{|\phi_x|}\right]_x = 0,&
&|\phi_x|=c(\rho),&
x \in (-1,1),~ t > 0,
\end{align}
coupled with homogeneous Dirichlet boundary conditions
\begin{align}\label{eq:boundary_continuum}
&\rho(t,-1)=\rho(t,1)=0,&
&\phi(t,-1)=\phi(t,1)=0,&
t>0,
\end{align}
and initial datum
\begin{align}\label{eq:initial_continuum}
&\rho(0,x)=\bar{\rho}(x),&
&x \in (-1,1).
\end{align}
\end{subequations}
In this version of the model, two exits are located at the edges of the interval $(-1,1)$.

It will be assumed that a maximal value of $\rho$ is prescribed, which we set equal to $1$ for simplicity. A preliminary \emph{formal} maximum principle proved in \cite{El-KhatibGoatinRosini} shows that $\rho$ never exceeds the range $[0,1]$ if $\bar{\rho}$ takes values in $[0,1]$. We shall assume what follows.
\begin{itemize}
  \item [(V)] The velocity map $v \colon [0,1] \to [0,+\infty)$ is $\C1$, strictly decreasing, with $v(0) \doteq v_{\max}>0$ and $v(1)=0$. Moreover, there exists a $\hat\rho\in (0,1)$ such that $[v(\rho) + \rho \, v'(\rho)] (\hat\rho - \rho) > 0$ for all $\rho \in (0,1)\setminus\{\hat\rho\}$.
  \item [(C)] The running cost map $c \colon [0,1] \to [1,+\infty]$ is $\C2$, increasing, with $c''(\rho)>0$, and with $c(0)=1$\,.
\end{itemize}
An important example is
\[c(\rho)=\frac{1}{v(\rho)},\]
see \cite{Hughes2002,BurgerDiFrancescoMarkowichWolfram} and Example~\ref{exe:c} below.

The boundary conditions \eqref{eq:boundary_continuum} are justified as follows. If we denote the efficiency $\beta \in [0,1]$ of the exits by the ratio between the escape velocity allowed by the exit and the maximum pedestrian velocity, we have that the flux of pedestrians through the exit is proportional to $\beta$, i.e. we obtain the Robin type condition condition
\begin{equation}\label{eq:robin}
   \rho \, v(\rho)  = \beta \,\rho \, v_{\rm max},
\end{equation}
where the term $\phi_x/|\phi_x|$ gets cancelled when considering the outgoing normal direction of the flux. This is equivalent to imposing the exit velocity
\[
   v|_{x=\mp 1} = \beta \, v_{\rm max}.
\]
Since we assume that the velocity is related to the density via a monotonic function, then imposing the velocity is equivalent to imposing the density. Hence, at variance with what happens in diffusion problem in which Robin boundary conditions become in the limit either Dirichlet or Neumann condition, in our case we simply obtain a Dirichlet conditions on the density. For simplicity, we always considered the case of \emph{maximum} efficiency $\beta = 1$, i.e.\ $v = v_{\rm max}$, which in turn gives zero boundary conditions on the density.
Note however that in the cases in which the characteristics are exiting the domain, no boundary conditions have to be assigned on the boundary. A discussion about this issue is provided later on in this section. By a simple duality argument, a similar zero Dirichlet condition is obtained for $\phi$, see the discussion in \cite{BurgerDiFrancescoMarkowichWolfram}.

A fully satisfactory existence theory for \eqref{eq:model} is still missing. A mathematical theory in this setting was first addressed in \cite{DiFrancescoMarkowichPietschmannWolfram}, in which the eikonal equation $\|\nabla \phi\| = c(\rho)$ was replaced by two regularised versions involving a Laplacian term. In both cases, existence of weak solutions was proved, and the uniqueness of entropy solutions was provided in one of the two cases. \cite{DiFrancescoMarkowichPietschmannWolfram} was the first rigorous attempt at the mathematical theory for the Hughes model. The use of a regularised eikonal equation implies a $\C1$ regularity of the velocity field in the continuity equation for the density $\rho$. Such a property does not hold in the non-regularized case \eqref{eq:hughes_continuum}, and the numerical simulations performed in \cite{DiFrancescoMarkowichPietschmannWolfram} showed evidence of discontinuities in the velocity field, expressing sudden changes of direction in the movement of the pedestrians.

A first attempt at a rigorous mathematical treatment of \eqref{eq:hughes_continuum} (without any regularization) was performed independently in \cite{AmadoriDiFrancesco} and \cite{El-KhatibGoatinRosini}. In both papers, the authors provided a complete description of the behaviour of solutions for short times in the case of Riemann-type initial data on an interval. In \cite{AmadoriDiFrancesco} a solution semigroup was defined for short times for piecewise constant initial conditions. As a special case, solutions with direction-switching phenomena were produced analytically. In \cite{El-KhatibGoatinRosini} the Riemann problem was analyzed more thoroughly, and the optimization of the evacuation time in that case was studied. Numerical simulations were performed in \cite{DiFrancescoMarkowichPietschmannWolfram} and in \cite{El-KhatibGoatinRosini}, essentially based on Godunov schemes in both papers.

The results in \cite{AmadoriDiFrancesco} laid the basis for tackling the convergence of a wave-front-tracking scheme \cite{BressanLectureNotes, DafermosWFT} for \eqref{eq:model}, see the numerical simulations in \cite{GoatinMimault}. Such an approach was only partly successful, and led to the results in \cite{AmadoriGoatinRosini}, which show existence of entropy solutions when the initial condition is \emph{well-separated}, i.e.~is yielding the formation of two distinct groups of pedestrians moving towards the two exits, with the emergence of a vacuum region in between, persisting until the total evacuation of the domain. A more general set of initial conditions was never covered so far, the main difficulty lying in the very large number of possible case studies in the interactions between classical and nonclassical waves in the wave-front-tracking algorithm.

As already observed in \cite{AmadoriDiFrancesco, AmadoriGoatinRosini, El-KhatibGoatinRosini}, the one-dimensional Hughes model \eqref{eq:hughes_continuum} can be reformulated in terms of the density $\rho$ as follows:
\begin{align}\label{eq:hughes_reformulated}
\rho_t + F(t,x,\rho)_x = 0,&
&\int_{-1}^{\xi(t)}\!\!\!\! c(\rho(t,y)) \,\d y = \int_{\xi(t)}^1\!\!\!\! c(\rho(t,y)) \,\d y,&
&x\in (-1,1),~ t>0,
\end{align}
with
\begin{align*}
F(t,x,\rho) \doteq \mathrm{sign}(x-\xi(t)) \, f(\rho),&
&f(\rho) \doteq \rho \, v(\rho).
\end{align*}
Here, $\xi(t)$ plays the role of a \emph{turning point} at which pedestrians switch their target exit in view of the overall crowd distribution.
The reduction of \eqref{eq:hughes_continuum} to \eqref{eq:hughes_reformulated} is soon explained. In order to have a unique physically relevant (semi-concave) viscosity solution to the Dirichlet problem for $\phi$, the derivative $\phi_x$ can change its sign just once from positive to negative. The turning point $\xi(t)$ is then defined as the point $x$ at which $x\mapsto\phi(t,x)$ reaches its maximum. $\xi$ depends on time because of the $\rho$-depending term on the right hand side of the eikonal equation in \eqref{eq:hughes_continuum}. Indeed, $\xi(t)$ depends \emph{non-locally} on $\rho(t,\cdot)$. The second equation in \eqref{eq:hughes_reformulated} is recovered after a simple integration on the eikonal equation in \eqref{eq:hughes_continuum}, upon the assumption that $x\mapsto\phi(t,x)$ is continuous at $\xi(t)$. Notice that the flux $F$ is possibly discontinuous along the \emph{turning curve} $x = \xi(t)$. For a detailed study of a Rankine-Hugoniot condition along the turning point we refer to \cite{AmadoriDiFrancesco,El-KhatibGoatinRosini}.

\begin{example}
The simplest choice for the cost function is $c\equiv1$.
In this case the obvious solution to the eikonal equation in \eqref{eq:hughes_continuum} is the distance function $\phi(t,x)=\mathrm{dist}(x,\{-1,1\})$ for all $t\geq 0$.
As a consequence, the ratio $\phi_x/|\phi_x|$ is equal to $-1$ for $x<0$ and to $1$ for $x>0$, and the model is equivalent to the scalar conservation law with discontinuous flux
\[
\rho_t + \left[\vphantom{\sum} \mathrm{sign}(x) \, \rho \, v(\rho)\right]_x=0.
\]
As a result, Hughes' model reduces to the Lighthill-Whitham-Richards (LWR) model for traffic flow \cite{LWR1, Richards} with negative velocity on $(-1,0)$ and positive velocity on $(0,1)$.
Roughly speaking, pedestrians are moving toward the closest exit regardless of the overall distribution. This is the typical behaviour of a crowd in a \emph{panic} state, see \cite{El-KhatibGoatinRosini}.
\end{example}

\begin{example}\label{exe:c}
In the literature, the usual choice for the cost function is
\begin{equation}\label{eq:originalc}
c(\rho) = \frac{1}{v(\rho)}
\end{equation}
see \cite{AmadoriDiFrancesco, AmadoriGoatinRosini, BurgerDiFrancescoMarkowichWolfram, DiFrancescoMarkowichPietschmannWolfram, Hughes2002, Hughes2003, TwarogowskaGoatinDuvigneau}.
In this case, in order to bypass the technical issue of $c$ blowing up at $\rho=1$ in the eikonal equation of \eqref{eq:hughes_continuum}, the initial condition $\bar{\rho}$ is assumed to take values in $[0, 1-\delta]$, for some constant $\delta>0$ sufficiently small.
This assumption, together with the maximum principle, ensures that the cost \eqref{eq:originalc} computed along any solution of \eqref{eq:model} is well defined.
\end{example}

Rewriting Hughes' model in the form \eqref{eq:hughes_reformulated} highlights the non-local mechanism by which the overall distribution of pedestrians in the domain biases the choice of the exit path for each single pedestrian. Since $c$ is increasing w.r.t.~$\rho$, such a mechanism penalises regions with high density in the choice of the nearest exit. In the relevant case in \eqref{eq:originalc}, $c(\rho)$ gets very large when $\rho$ is close to $1$, and the weighted distance function $\phi$ gets very steep in this case.

The reformulation of \eqref{eq:hughes_continuum} in the form \eqref{eq:hughes_reformulated} clearly suggests that Hughes' model can be seen as a two-sided first order LWR model for traffic flow \cite{LWR1, Richards}, with the turning point $\xi(t)$ splitting the whole interval $(-1,1)$ into two subintervals. Hinted by the new particle approach developed in \cite{DF_rosini}, in which nonnegative entropy solutions to nonlinear scalar conservation laws $\rho_t+[\rho\, v(\rho)]_x = 0$ are rigorously approximated by the empirical measure of the follow-the-leader ODE particle system, in which particle $i$ satisfies an equation of the form
\begin{equation*}
  \dot{x}_i = v\left(\frac{m}{x_{i+1}-x_i}\right),
\end{equation*}
where $m$ denotes the mass assigned to each particle, in the present paper we propose a new \emph{deterministic particle} approach to \eqref{eq:hughes_reformulated}, which can be roughly formulated as follows. Here we just outline our particle method and refer to section \ref{sec:particle} for its detailed application to approximating the solution of \eqref{eq:hughes_reformulated} via atomization.

For a fixed $N\in \mathbb{N}$ and a fixed mass $m>0$, let $-1<\bar{x}_0<\ldots<\bar{x}_N<1$ be the initial positions of $N+1$ particles in $(-1,1)$. The initial position $\bar{\xi}$ of the discrete turning point is determined uniquely by the identity
\begin{equation}
\begin{split}
& \left [\bar{x}_0+1\right ]+\sum_{i=0}^{I_0-1} \left [\bar{x}_{i+1}-\bar{x}_i\right ] c\left(\frac{m}{\bar{x}_{i+1}-\bar{x}_i}\right)+[\bar\xi-\bar{x}_{I_0}]\\
=&\left [\bar{x}_{I_0+1}-\bar\xi\right ]+\sum_{i=I_0+1}^{N-1} \left [\bar{x}_{i+1}-\bar{x}_i\right ] c\left(\frac{m}{\bar{x}_{i+1}-\bar{x}_i}\right)+\left [1-\bar{x}_N\right ],\label{eq:initial_turning_discrete_intro}
\end{split}
\end{equation}
which replaces the integral identity in \eqref{eq:hughes_reformulated}, with the density $\rho$ replaced by a discretized particle density, and with the assumption that a vacuum region is placed around $\bar\xi$ and near the boundary. We observe that here we assume zero density around the turning point and outside the interval $[\bar{x}_0,\bar{x}_N]$, which yields \eqref{eq:initial_turning_discrete_intro} in view of the assumption  $c(0)=1$. The pair $(I_0,\bar\xi)$ is uniquely determined by condition (\ref{eq:initial_turning_discrete_intro}) and
$\bar{x}_{I_0}\leq\bar\xi<\bar{x}_{I_0+1}$.

Assuming for simplicity that $\bar{x}_{I_0}\neq\bar\xi$, i.e.\ that $\bar{x}_{I_0}<\bar\xi<\bar{x}_{I_0+1}$, we let the particles $\bar{x}_0,\ldots,\bar{x}_{I_0}$ move towards the left exit via a backward follow-the-leader system, and the particles $\bar{x}_{I_0+1},\ldots,\bar{x}_N$ move towards the right exit via a forward one. More precisely,
\begin{subequations}\label{eq:discrete_Hughes}
\begin{equation}\label{eq:ftl_intro}
\begin{cases}
\dot{x}_0 = -v(0)\\
\dot{x}_i = -v\left(\frac{m}{x_{i}-x_{i-1}}\right),&i=1,\ldots,I_0,\\
\dot{x}_i = v\left(\frac{m}{x_{i+1}-x_i}\right),&i=I_0+1,\ldots,N-1,\\
\dot{x}_N=v(0)\,,
\end{cases}
\end{equation}
with $x_i(0)=\bar{x}_i$ for all $i=0,\ldots,N$. The discrete turning point $\xi(t)$ is required to obey $\xi(0)=\bar\xi$ and the identity
\begin{equation}\label{eq:turning_discrete_intro}
\begin{split}
& \left [x_0+1\right ]+\sum_{i=1}^{I_0-1} \left [x_{i+1}-x_i\right ]c\left(\frac{m}{x_{i+1}-x_i}\right)+\left [\xi-x_{I_0}\right ]\\
=&\left [x_{I_0+1}-\xi\right ]+\sum_{i=I_0+1}^{N-1} \left [x_{i+1}-x_i\right] c\left(\frac{m}{x_{i+1}-x_i}\right)+\left [1-x_N\right ]\,.
\end{split}
\end{equation}
\end{subequations}
The above formula is used as long as all points $\{x_i\}_{i=0}^N$ belong to the interval $[-1,1]$, and is suitably modified by removing the particles leaving the domain and adjusting the length of the first and last interval
(see the subsection~\ref{sec:particle} for the details). Clearly, system \eqref{eq:ftl_intro} must be restarted once $\xi(t)$ crosses one of the two neighbor particles $x_{I_0}(t)$, $x_{I_0+1}(t)$. In this case, a detailed analysis comparing the speed of the neighbor particle with $\dot{\xi}$ at the crossing time is needed in order to define the post-crossing evolution consistently. We shall refer to system \eqref{eq:discrete_Hughes} as the \emph{FTL-Hughes model}.

The goal of this paper is twofold.
\begin{itemize}
  \item First of all we use the FTL-Hughes model \eqref{eq:discrete_Hughes} as a way to prove existence of weak solutions to the continuum model \eqref{eq:model}. We perform this task under analogous assumptions to those required in \cite{AmadoriGoatinRosini}, namely in situations in which a vacuum region is generated around $x=\xi(t)$, but with a lighter proof and under more general assumptions on the cost function $c$ (only the cost function $c(\rho)=1/v(\rho)$ is covered in \cite{AmadoriGoatinRosini}). As a byproduct of our result, we obtain a rigorous deterministic many-particle approximation result for a class of solutions to \eqref{eq:model}.
  \item As a second task, we implement the FTL-Hughes scheme \eqref{eq:discrete_Hughes} numerically in more complicated situations yielding particles crossing the turning point. We shall compare these simulations to existing ones in the literature \cite{DiFrancescoMarkowichPietschmannWolfram, El-KhatibGoatinRosini} based on Godunov's scheme, and show that our deterministic particle approach is able to capture mass transfer phenomena around the turning point.
\end{itemize}
Next we provide our notion of solution to \eqref{eq:model} and state our main results. For notational simplicity, we introduce the maximal initial density $\rho_{\max} \in (0,1]$ such that $c$ is well defined on $[0,\rho_{\max}]$. Then the initial datum $\bar{\rho}$ is assumed to be in $\L\infty((-1,1);\R)$ with $\|\bar{\rho}\|_\infty \le \rho_{\max}$.

Since we are dealing with zero Dirichlet boundary data, assuming strict concavity of $\rho\mapsto f(\rho)=\rho\, v(\rho)$ on $[0,1]$, it is easily seen that the proper boundary condition set in \cite{bardos} is $\mathrm{Tr}(\rho)\in [0,\hat{\rho}]$, where $\hat{\rho}\in (0,1)$ is uniquely determined by $f'(\hat{\rho})=0$, see condition (V) above. Such a condition expresses the fact that characteristics are pointing outward the domain (both in $x=-1$ and in $x=1$) if and only if $\mathrm{Tr}(\rho)\in [0,\hat{\rho}]$ on those two points. It is well known (see \cite{Dubois1988}) that the boundary problem with Dirichlet conditions for a nonlinear conservation law is solved by considering a Riemann problem involving the boundary datum and the trace $\mathrm{Tr}(\rho)$ at the boundary. Since the boundary datum is zero and since the flux is concave, the Riemann solver at the boundary only consists of rarefaction waves, which leave the domain if $\mathrm{Tr}(\rho)<\hat{\rho}$ and are centered on the value $\rho=\hat{\rho}$ at the boundary points if $\mathrm{Tr}(\rho)\geq \hat{\rho}$. Now, a solution of the Cauchy problem with a general $\L\infty$ initial datum with support on $[-1,1]$ and \emph{without boundary conditions} within the wave-front-tracking algorithm \cite{BressanLectureNotes, DafermosWFT} easily shows that $\rho$ achieves only values in $[0,\hat{\rho})$ outside $[-1,1]$. Therefore, the solution via wave-front-tracking without boundary data coincides with the one in which the boundary datum $\rho=0$ is imposed every time a wave interacts with the boundary. Hence, by using the convergence results in ~\cite{karlsen_lie_risebro_1999}, we recover that actually \emph{no boundary conditions have to be prescribed in our case}, and the solution $\rho$ to \eqref{eq:model} can be constructed by coupling the integral relation in \eqref{eq:hughes_reformulated} with the continuity equation on the \emph{whole} $\R$ restricted to the interval $[-1,1]$.

For this reason, also due to the fact that we shall only consider solutions with a vacuum region around the turning point, our notion of entropy solution will simply be obtained by `merging' the two traffic equations on $[-1,\xi(t)]$ and $[\xi(t),1]$ respectively. Observe that the strong traces of the solution at the boundary points exist due to the genuine non-linearity of the flux
(\cite{Panov, Vasseur}) and must satisfy
\begin{align*}
&\rho(t,-1^+)\leq \hat{\rho},& \rho(t,1^-)\leq \hat{\rho}\,.
\end{align*}

In this paper we only deal with boundary conditions with maximal efficiency $\beta=1$ in \eqref{eq:robin}. We shall address the case $\beta<1$ in a future work. We also plan to impose (local and non-local) point constraints on the flow at the two boundary points to model the presence, for instance, of exit doors, see \cite{AndreianovDonadelloRazafisonRosini2015-2, AndreianovDonadelloRosini2014, AndreianovDonadelloRazafisonRosini2015-1, ColomboRosini2009, ColomboGoatinRosini2010, ColomboRosini2005} or \cite[Chapter~6]{rosini_book} for more details.

We now state our notion of solution, which is motivated by the presence of a persistent vacuum region around the turning point for the class of solutions considered in this paper.
\begin{definition}\label{def:entropy_solution}
A map $\rho\in \L\infty([0,+\infty)\times \R;[0,\rho_{\max}])$ is called a (well-separated) \emph{entropy solution} to \eqref{eq:model} if
\begin{itemize}
\item There exists $\varepsilon>0$ such that $\rho$ is equal to zero on the open cone \[\mathcal{C}\doteq\left \{(t,x) \in \R_+ \times \R \colon |x-\bar\xi| < \varepsilon \, t\right \}.\]
  \item $\rho\,\mathbf{1}_{(-\infty,\bar\xi)}$ is an entropy solution in the Kruzhkov sense \cite{Kruzhkov} to the Cauchy problem for $\rho_t -[\rho\, v(\rho)]_x = 0$ with initial datum $\bar\rho\,\mathbf{1}_{(-\infty,\bar\xi)}$.
  \item $\rho\,\mathbf{1}_{(\bar\xi,+\infty)}$ is an entropy solution in the Kruzhkov sense \cite{Kruzhkov} to the Cauchy problem for $\rho_t +[\rho\, v(\rho)]_x = 0$ with initial datum $\bar\rho\,\mathbf{1}_{(\bar\xi,+\infty)}$.
  \item The turning curve $\mathcal{T}\doteq\left \{(t,x) \in \R_+ \times [-1,1] \colon x=\xi(t)\right \}$ is continuous with $\xi(0)=\bar{\xi}$. Moreover, $\rho(t,\cdot)$ and $\xi(t)$ satisfy
      \begin{equation}\label{eq:integral_equation}
        \int_{-1}^{\xi(t)}c(\rho(t,y)) \, {\d} y = \int_{\xi(t)}^1 c(\rho(t,y)) \, {\d} y\,,
      \end{equation}
  for almost every $t\geq 0$\,. Finally, $\mathcal{T}\subset \mathcal{C}$.
\end{itemize}
\end{definition}

As a first result, we provide the following theorem for the `symmetric' case. Let us denote by $\mathcal{S}$ the space of functions $\rho\in\L\infty\left((-1,1);[0,1]\right)$ with $\|\rho\|_\infty\le\rho_{\max}$ that are \emph{even}, namely $\rho(x) = \rho(-x)$ for a.e.~$x\in(-1,1)$.

\begin{theorem}\label{teo:1}
For any initial datum $\bar\rho$ in $\mathcal{S}$, there exists a unique entropy solution $\rho$ to \eqref{eq:model} in the sense of Definition~\ref{def:entropy_solution}, such that $\rho(t,\cdot) \in \mathcal{S}$ for all $t>0$. Such a solution is obtained as a strong $\L1$ limit of the discrete density $R(t,x)$ constructed in subsection~\ref{sec:particle} via the FTL-Hughes particle system \eqref{eq:discrete_Hughes}.
\end{theorem}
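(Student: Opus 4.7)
The plan is to use the reflection $x\mapsto -x$ symmetry at the particle level to pin the discrete turning point at the origin, thereby collapsing the FTL-Hughes scheme \eqref{eq:discrete_Hughes} into two independent LWR follow-the-leader systems to which the convergence theorem of \cite{DF_rosini} applies directly.

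First, given $\bar\rho\in\mathcal{S}$, I would atomize it symmetrically: fix $N=2I_0+1$ odd and place particles $\bar x_0<\ldots<\bar x_N$ via the inverse cumulative distribution function of $\bar\rho$ so that each pair encloses mass $m=\frac{1}{N}\int_{-1}^{1}\bar\rho(x)\,\d x$. Evenness of $\bar\rho$ gives $\bar x_{N-i}=-\bar x_i$. Substituting into \eqref{eq:initial_turning_discrete_intro}, the boundary contributions $\bar x_0+1$ and $1-\bar x_N$ are equal, and the reindexing $j=N-i$ identifies the two weighted-gap sums; the identity reduces to $\bar\xi-\bar x_{I_0}=\bar x_{I_0+1}-\bar\xi$, i.e.\ $\bar\xi_N=0$. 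Next, setting $y_i(t):=-x_{N-i}(t)$ and inspecting the four cases of \eqref{eq:ftl_intro}, one checks that $(y_i)$ solves exactly the same ODE system as $(x_i)$ with the same initial data. The discrete maximum principle $m/(x_{i+1}-x_i)\le\rho_{\max}$ inherited from \cite{DF_rosini} keeps us in the open set where the right-hand side is Lipschitz, so ODE uniqueness gives $x_{N-i}(t)\equiv -x_i(t)$. Inserting this symmetry into \eqref{eq:turning_discrete_intro} and using strict monotonicity in $\xi$ of the left-hand side minus the right-hand side (guaranteed by $c\ge 1$) yields $\xi_N(t)\equiv 0$. Moreover $\dot x_{I_0+1}\ge v(\rho_{\max})>0$ and by symmetry $\dot x_{I_0}\le -v(\rho_{\max})$, so the innermost particles separate at a uniform rate, no particle ever crosses $\xi_N$, and the restart procedure of subsection~\ref{sec:particle} is never triggered.

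With $\xi_N(t)\equiv 0$, the FTL-Hughes system decouples into a backward LWR FTL system on $[-1,0]$ and a forward one on $[0,1]$, both precisely of the type analyzed in \cite{DF_rosini}. Applying that convergence theorem on each half—with the exit adjustment at $x=\pm 1$ from subsection~\ref{sec:particle}, which corresponds to solving the LWR Cauchy problem on $\R$ and restricting the result to $[-1,1]$—the discrete density $R_N$ converges strongly in $\L1$ on each half to the unique Kruzhkov entropy solutions $\rho^{\mp}$ of $\rho_t\mp[\rho v(\rho)]_x=0$ with initial data $\bar\rho\,\mathbf{1}_{(-1,0)}$ and $\bar\rho\,\mathbf{1}_{(0,1)}$ respectively. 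Gluing them produces $\rho\in\mathcal{S}$; the lower bound on the innermost-particle speeds passes to the limit and yields the vacuum cone $\mathcal{C}=\{|x|<v(\rho_{\max})t\}$ required by Definition~\ref{def:entropy_solution}; and \eqref{eq:integral_equation} holds trivially at $\xi(t)=0$ by evenness of $c(\rho(t,\cdot))$.

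Uniqueness within $\mathcal{S}$ follows at once: for any entropy solution $\rho\in\mathcal{S}$, evenness of $c(\rho(t,\cdot))$ together with $c\ge 1>0$ forces $\xi(t)=0$ in \eqref{eq:integral_equation}, after which Kruzhkov uniqueness applied to each half gives the claim. The main technical obstacle lies in the convergence step: one has to verify that the proof in \cite{DF_rosini}, stated for the Cauchy problem on $\R$, transfers without loss to each half-problem, meaning that (i) the discrete BV/OSL estimates survive the exit truncation at $x=\pm 1$ performed in subsection~\ref{sec:particle}, and (ii) no spurious boundary layer forms at $x=0$, where the particle configuration on each half ends at a ``free'' endpoint rather than at a ghost particle. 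Once these estimates are in place, the Kruzhkov entropy inequality passes to the limit on each half as in \cite{DF_rosini}, completing the proof.
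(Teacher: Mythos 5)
Your proposal follows essentially the same route as the paper: symmetry of the atomized particle configuration pins the discrete turning point at $\xi^n(t)\equiv 0$, the FTL-Hughes system then decouples into a backward and a forward follow-the-leader system on the two half-intervals, and the convergence theorem of \cite{DF_rosini} is applied to each half (the paper additionally routes the identification of the limit as an entropy solution with $\xi\equiv 0$ through \cite{AmadoriGoatinRosini} rather than arguing it directly). The extra details you supply --- the reflection/ODE-uniqueness argument, the separation of the innermost particles, and the uniqueness step --- are consistent with, and merely flesh out, the paper's terser argument.
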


In the next theorem we state our main result, which deals with a class of \emph{small} initial data in $\BV$. For further use, we define the function
\begin{equation}\label{eq:upsilon}
\Upsilon(\rho) \doteq c(\rho) - c'(\rho) \, \rho\,,
\end{equation}
which is strictly decreasing in view of assumption (C) above. We then set
\begin{align}
 & L \doteq \mathrm{Lip}[\Upsilon|_{[0,\rho_{\max}]}]=\max \left\{  c''\left(\rho\right) \rho \colon \rho \in \left[0,\rho_{\max}\right] \right\}\,,\label{eq:lipschitz}\\
 & C \doteq c'(\rho_{\max})\, \rho_{\max} = \max\left\{c'(\rho) \, \rho \colon \rho \in \left[0,\rho_{\max}\right]\right\}\,.\label{eq:Cgrande}
\end{align}

\begin{theorem}\label{teo:2}
If $\rho_{\max}<1$ and the initial datum $\bar\rho\in \BV((-1,1);\left[0,\rho_{\max}\right])$ satisfies
\begin{equation}\label{eq:inequality}
  \frac{v_{\max}}{2}
\left[\vphantom{\sum} L\,{\tv}(\bar{\rho})
+
3 \, C\right]
<
v(\rho_{\max}),
\end{equation}
then there exists a unique entropy solution $\rho$ to \eqref{eq:model} in the sense of Definition~\ref{def:entropy_solution} defined globally in time. Such a solution is obtained as a strong $\L1$ limit of the discrete density $R(t,x)$ constructed in subsection~\ref{sec:particle} via the FTL-Hughes particle system \eqref{eq:discrete_Hughes}.
\end{theorem}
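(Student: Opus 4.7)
The plan is to prove Theorem~\ref{teo:2} by showing that the FTL-Hughes particle scheme \eqref{eq:discrete_Hughes}, run on an atomization of $\bar\rho$ into $N+1$ equal-mass particles of mass $m_N=\|\bar\rho\|_{\L1}/N$, produces a discrete density $R_N$ whose $\L1$ limit as $N\to\infty$ satisfies Definition~\ref{def:entropy_solution}. I would initialise $\bar x_0<\ldots<\bar x_N$ so that each cell carries mass exactly $m_N$, determine $(I_0,\bar\xi)$ from \eqref{eq:initial_turning_discrete_intro}, and then run the coupled system \eqref{eq:ftl_intro}--\eqref{eq:turning_discrete_intro} together with the restart and particle-removal procedure described in subsection~\ref{sec:particle}.

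The core quantitative step is to show that the initial vacuum gap around $\bar\xi$ never closes. Writing $d_i=x_{i+1}-x_i$ and $R_i=m_N/d_i$, the identity
\[\frac{\d}{\d t}\!\left[d_i\,c(R_i)\right] = (\dot x_{i+1}-\dot x_i)\,\Upsilon(R_i),\]
with $\Upsilon$ as in \eqref{eq:upsilon}, combined with differentiation of \eqref{eq:turning_discrete_intro} and Abel summation on the two telescoping sums, yields an expression for $2\dot\xi$ that splits into boundary contributions of the form $\dot x_\ast\,[1-c'(R_\ast)R_\ast]$, each bounded by $v_{\max}(1+C)$ in absolute value, and bulk contributions $\sum\dot x_j\,[\Upsilon(R_j)-\Upsilon(R_{j-1})]$, bounded by $v_{\max}\,L\,\tv(R_N(t,\cdot))$ thanks to \eqref{eq:lipschitz}. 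Careful bookkeeping yields $|\dot\xi|\le\tfrac{v_{\max}}{2}[L\,\tv(R_N(t,\cdot))+3C]$, so that \eqref{eq:inequality} forces $|\dot\xi|<v(\rho_{\max})$ strictly. Since the backward/forward FTL structure guarantees $|\dot x_{I_0}|,|\dot x_{I_0+1}|\ge v(\rho_{\max})$, the gaps $\xi-x_{I_0}$ and $x_{I_0+1}-\xi$ grow at a uniform linear rate, producing the cone $\mathcal{C}$ required in Definition~\ref{def:entropy_solution}.

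To close the circularity between \eqref{eq:inequality} and the BV bound, I would combine the above with a standard FTL estimate on each half: the chains of densities $R_i(t)$ on $(-1,\xi(t))$ and on $(\xi(t),1)$ each solve monotone ODE systems for which $\sum_i|R_i-R_{i-1}|$ is non-increasing between restarts, while restarts only amount to dropping a boundary pair or shifting $I_0$, neither of which creates new variation since $R_N\equiv 0$ in a neighborhood of $\xi$. Together with the FTL maximum principle $R_N\le\rho_{\max}$ and the uniform velocity bound, Helly's theorem yields a subsequence $R_N\to\rho$ in $\L1_{\rm loc}$, while equicontinuity of $\xi_N$ gives $\xi_N\to\xi$ uniformly on compacts. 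Consistency with the Kruzhkov entropy inequality on each side of $\xi$ follows by adapting the arguments of \cite{DF_rosini} to the two one-sided conservation laws; passage to the limit in \eqref{eq:turning_discrete_intro} yields \eqref{eq:integral_equation} by dominated convergence and continuity of $c$. Uniqueness reduces to Kruzhkov's theorem on each half, using that \eqref{eq:integral_equation} uniquely determines $\xi$ from $\rho$ by strict monotonicity of $s\mapsto\int_{-1}^s c(\rho(t,y))\,\d y-\int_s^1 c(\rho(t,y))\,\d y$.

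The chief obstacle is the vacuum-persistence estimate, which requires extracting the Lipschitz constant $L$ of $\Upsilon$ rather than $\|c\|_\infty$: this is precisely what the Abel summation affords, and is the reason the smallness assumption is phrased in the form \eqref{eq:inequality} rather than in terms of $c$. Propagating the estimate in time without deterioration forces a continuation argument: I would let $T^\ast$ be the largest time on which both the vacuum gap and the bound $\tv(R_N(t,\cdot))\le\tv(\bar\rho)$ hold, and use the strict inequality $|\dot\xi|<v(\rho_{\max})$ together with continuity in $t$ to conclude $T^\ast=+\infty$.
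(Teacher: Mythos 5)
Your proposal follows essentially the same route as the paper's proof: differentiate the discrete turning-point identity, use Abel summation and the Lipschitz bound on $\Upsilon$ to obtain $|\dot\xi^n|\le \frac{v_{\max}}{2}\left[L\,\tv(\bar\rho)+3C\right]<v(\rho_{\max})$, compare this with the speeds of the two particles adjacent to $\xi^n$ via the discrete maximum principle so that the vacuum cone persists and the scheme splits into two uncoupled follow-the-leader systems, then invoke the convergence theory of \cite{DF_rosini} on each side, pass to the limit in the discrete integral relation, and get uniqueness from Kruzhkov's theorem on each half. Apart from minor bookkeeping (your intermediate per-boundary bound $v_{\max}(1+C)$ is looser than what the cancellation coming from $c\ge 1$, i.e.\ $1-\Upsilon(\rho)\le c'(\rho)\rho$, actually yields, although your final constant $3C$ is the paper's) and the paper's additional check that $R(0,\cdot)$ still attains $\bar\rho$ despite the emptied cell around $\bar\xi$, this is the argument of Proposition~\ref{prop:1} and subsection~\ref{sec:mainproof2}.
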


We remark that the $\rho_{\max}<1$ assumption above is essential in order to have the right-hand-side in the inequality \eqref{eq:inequality} strictly positive.

The proofs of Theorems \ref{teo:1} and \ref{teo:2} are carried out in subsections~\ref{sec:mainproof1} and \ref{sec:mainproof2} respectively.

The paper is structured as follows. In section~\ref{sec:results} we cover the analytical theory. In particular, in subsection~\ref{sec:particle} we set up the deterministic particle scheme for \eqref{eq:model}, namely we construct the approximation to \eqref{eq:hughes_reformulated} via the FTL-Hughes model \eqref{eq:discrete_Hughes}. In subsections~\ref{sec:mainproof1} and \ref{sec:mainproof2} we prove our analytical results, with Theorem~\ref{teo:2} being our main result. In section~\ref{sec:numerics} we show the numerical simulations of our particle methods in simple Riemann-type initial conditions. In particular, we compare our tests with previous ones performed in \cite{DiFrancescoMarkowichPietschmannWolfram, GoatinMimault}. The numerical section shows evidence that the two methods, though conceptually different, are in very good agreement when using a large number of particles. We stress here that, although the analytical results concerning our deterministic particle method are restricted to cases in which particle separate into two distinct sets keeping the same direction for all times, the numerical simulations also cover cases with direction switching, showing that the particle approach works also in those cases. Our study is restricted to the one-dimensional case. Possible extensions to multidimensional cases rely on the extension of the results in \cite{DF_rosini} to multi dimensional scalar conservation laws, which is not available in the literature.

\section{Analytical results}\label{sec:results}

\subsection{Particle approximation}\label{sec:particle}

Let $\bar{\rho}$ be in $\L\infty((-1,1);[0,\rho_{\max}])$. For a fixed integer $n\in \N$, we set $N \doteq 2^n$ and $m \doteq 2^{-n} \, M$, where $M\doteq\|\bar{\rho}\|_1$.
We set
\[
\bar{x}_0 \doteq \min\left\{\spt\bar{\rho}\right\},
\]
where $\spt$ denotes the support. We recursively define
\begin{align*}
&\bar{x}_i \doteq \inf\left\{ x > \bar{x}_{i-1} \colon \int_{\bar{x}_{i-1}}^x \bar{\rho}(y) \, \d y \geq m\right\},&
&i\in\left\{1,\ldots,N\right\}.
\end{align*}
The above defines a set of $N + 1$ particles $-1\le\bar{x}_0<\bar{x}_1<\ldots<\bar{x}_{N-1}<\bar{x}_N\le1$ with the property
\begin{align*}
&\int_{\bar{x}_i}^{\bar{x}_{i+1}} \bar{\rho}(y) \,\d y = m,&
&i\in\left\{0,\ldots,N-1\right\}.
\end{align*}
In particular, this implies that
\begin{align*}
&\frac{m}{\rho_{\max}} =
\frac{1}{\rho_{\max}}\int_{\bar{x}_i}^{\bar{x}_{i+1}} \bar{\rho}(y) \,\d y \le
\bar{x}_{i+1} - \bar{x}_i,&
&i\in\left\{0,\ldots,N-1\right\}.
\end{align*}
For future reference, we denote the local discrete initial densities
\begin{align*}
&\bar{R}_{i+\frac{1}{2}} \doteq \frac{m}{\bar{x}_{i+1}-\bar{x}_i},&
&i\in\left\{0,\ldots,N-1\right\},
\end{align*}
and introduce the discretized initial density $\bar{R} \colon \R \to [0,\rho_{\max}]$ by
\begin{equation}
\bar{R}(x) \doteq \sum_{i=0}^{N-1} \bar{R}_{i+\frac{1}{2}} \, \mathbf{1}_{[\bar{x}_i,\bar{x}_{i+1})}(x).
\label{eq:R_bar}
\end{equation}
We now define the initial approximated turning point $\bar{\xi}^n$ via the formula
\begin{equation}
\int_{-1}^{\bar{\xi}^n} c\left(\bar{R}(y)\right) \,\d y =
\int_{\bar{\xi}^n}^1 c\left(\bar{R}(y)\right) \,\d y.
\label{eq:TP}
\end{equation}
Clearly, the above formula defines $\bar{\xi}^n \in (-1,1)$ uniquely.

The next step is the definition of the evolving particle scheme.
Roughly speaking, $\bar{\xi}^n$ splits the set of particles into left and right particles, the former moving according to a \emph{backward} follow-the-leader scheme, the latter according to a \emph{forward} one.
By a slight modification of the initial condition, we may always assume that $\bar{\xi}^n$ does not coincide with any of the particles.
Then, there exists $I_0\in \{0,\ldots,N\}$ such that $\bar{\xi}^n \in (\bar{x}_{I_0},\bar{x}_{I_0+1})$.
We then set
\begin{equation}\label{eq:FTL}
\begin{cases}
\dot{x}_0(t)= -v_{\max},\\
\dot{x}_i(t)=-v\left(\frac{m}{x_{i}(t)-x_{i-1}(t)}\right), &i \in \{1,\ldots, I_0\},\\
\dot{x}_i(t)=v\left(\frac{m}{x_{i+1}(t)-x_i(t)}\right), &i \in \{I_0+1,\ldots, N-1\},\\
\dot{x}_N(t)= v_{\max},\\
x_i(0)=\bar{x}_i,&i \in \{0,\ldots, N\}.
\end{cases}
\end{equation}
We consider the corresponding discrete density
\begin{align*}
&R_{i+\frac{1}{2}}(t) \doteq \frac{m}{x_{i+1}(t)-x_i(t)},&
&i \in \{0,\ldots,N-1\}\setminus\{I_0\},
\\
&R_{i+\frac{1}{2}}(t) \doteq 0,&
&i \in \{-1,I_0,N\}.
\end{align*}
Notice that the above density has been set to equal zero outside the particle region $[x_0(t),x_N(t))$ and around the turning point, namely in $[x_{I_0}(t),x_{I_0+1}(t))$. The latter in particular is simply due to a consistency with the numerical simulations, in which the computation of the turning point is made simpler in this way. This simplifying assumption will introduce a small error $m=2^{-n}M$ in the total mass.

In view of the above notation, \eqref{eq:FTL} can be re-written as
\begin{equation}\label{eq:DevinTownsend2}
\begin{cases}
\dot{x}_i(t)=-v\left(R_{i-\frac{1}{2}}(t)\right),
&i \in \{0,\ldots, I_0\},
\\
\dot{x}_i(t)=v\left(R_{i+\frac{1}{2}}(t)\right),
&i \in \{I_0+1,\ldots, N\},\\
x_i(0)=\bar{x}_i,&i \in \{0,\ldots, N\}.
\end{cases}
\end{equation}
Notice that $R_{I_0+1/2}$ does not bias the movement of any of the particles, and this is an argument in favour of the ansatz $R_{I_0+1/2}(t) = 0$. For future reference, we compute for any $i \in \{0,\ldots, N-1\} \setminus\{I_0\}$
\begin{equation}\label{eq:DevinTownsend}
\dot{R}_{i+\frac{1}{2}}(t) = -\frac{m \left[\dot{x}_{i+1}(t)-\dot{x}_i(t)\right]}{\left[x_{i+1}(t)-x_i(t)\right]^2}
= -R_{i+\frac{1}{2}}(t) \, \frac{\dot{x}_{i+1}(t)-\dot{x}_{i}(t)}{x_{i+1}(t)-x_{i}(t)}.
\end{equation}
In particular, the time derivative of the discrete density $R_{i+1/2}$ satisfies
\begin{equation}
\begin{cases}
\dot{R}_{i+\frac{1}{2}}(t) = \dfrac{m\left[v(R_{i+\frac{1}{2}}(t))-v(R_{i-\frac{1}{2}}(t))\right]}{[x_{i+1}(t)-x_i(t)]^2},
&i \in \{0,\ldots, I_0-1\},\\[10pt]
\dot{R}_{i+\frac{1}{2}}(t) = - \dfrac{m\left[v(R_{i+\frac{3}{2}}(t))-v(R_{i+\frac{1}{2}}(t))\right]}{\left[x_{i+1}(t)-x_i(t)\right]^2},
&i \in \{I_0+1,\ldots, N-1\}.
\end{cases}
\label{eq:discrete_continuity_equation}
\end{equation}

The (unique) solution to the system \eqref{eq:FTL} is well defined until the turning point does not collide with a particle.
Similarly, the expressions \eqref{eq:discrete_continuity_equation} for the time derivative of the discrete densities only hold until the first collision of a particle with the turning point.
We note that the density $R_{I_0+1/2}(t)$ is equal to zero until the turning point collides with a particle.

In view of the discussion before Definition~\ref{def:entropy_solution} on the solution of the continuum problem at the boundary, we shall not impose any boundary condition to the particle system \eqref{eq:FTL}, and we shall follow the movement of each particle whether or not they are in $[-1,1]$. Hence, the discrete densities $R_{i+1/2}(t)$ are (in principle) defined for all $t>0$ and for all $i \in \{0,\ldots,N-1\}$. We remark that a careful analysis of the many particle approximation for the Dirichlet boundary value problem for a scalar conservation law is performed in \cite{DiFrancescoFagioliRosiniRusso2}.

The approximated turning point $\xi^n(t)$ is implicitly uniquely defined by
\begin{equation}\label{eq:equilibriocosto}
\int_{-1}^{\xi^n(t)} c(R(t,y)) \, \d y = \int^1_{\xi^n(t)} c(R(t,y)) \, \d y,
\end{equation}
where $R \colon \R_+\times\R \to [0,\rho_{\max}]$ is the discretized density defined by
\begin{equation}\label{eq:density}
  R(t,x) \doteq \sum_{i=0}^{N-1} R_{i+\frac{1}{2}}(t) \, \mathbf{1}_{[x_i(t),x_{i+1}(t))}(x).
\end{equation}
Clearly $\xi^n(t)$ belongs to $(-1,1)$ for any $t\ge0$. We underline that $\xi^n(0)$ does not necessarily coincide with $\bar{\xi}^n$.
We compute now the time derivative of $\xi^n(t)$.

\begin{lemma}\label{lem:dotxi}
Fix $t>0$, and let $I_-,I_0,I_+ \in \{1,\ldots,N-1\}$ be such that
\begin{align*}
&-1 \in \left[x_{I_--1}(t),x_{I_-}(t)\right),&
&\xi^n(t) \in \left(x_{I_0}(t),x_{I_0+1}(t)\right),&
&1 \in \left(x_{I_+}(t) ,x_{I_++1}(t)\right].
\end{align*}
At time $t$ we have then
\begin{align*}
2 \, \dot{\xi}^n=&
\left\{
\left[v(R_{I_--\frac{3}{2}})-v(R_{I_--\frac{1}{2}})\right]
\, \frac{x_{I_-}+1}{x_{I_-}-x_{I_--1}}
+v(R_{I_--\frac{1}{2}})
\right\}
c'(R_{I_--\frac{1}{2}}) \, R_{I_--\frac{1}{2}}
\\
&-v(R_{I_0-\frac{1}{2}}) \left[1-\Upsilon(R_{I_0-\frac{1}{2}})\right]
+\sum_{i=I_-}^{I_0-1} v(R_{i-\frac{1}{2}})
\left[ \Upsilon(R_{i-\frac{1}{2}}) - \Upsilon(R_{i+\frac{1}{2}})\right]\\
&+v(R_{I_0+\frac{3}{2}}) \left[1-\Upsilon(R_{I_0+\frac{3}{2}})\right]
+\sum_{i=I_0+2}^{I_+} v(R_{i+\frac{1}{2}})  \left[\Upsilon(R_{i-\frac{1}{2}}) - \Upsilon(R_{i+\frac{1}{2}})\right]\\
&-\left\{ v(R_{I_++\frac{1}{2}}) +\left[\vphantom{x_{I_-}} v(R_{I_++\frac{3}{2}})-v(R_{I_++\frac{1}{2}})\right] \, \frac{1-x_{I_+}}{x_{I_++1}-x_{I_+}} \right\}
c'(R_{I_++\frac{1}{2}}) \, R_{I_++\frac{1}{2}} ,
\end{align*}
where $\Upsilon$ is defined in \eqref{eq:upsilon}.
\end{lemma}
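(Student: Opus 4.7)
The plan is to differentiate the implicit identity \eqref{eq:equilibriocosto} in time. Set
\[
F(t,\xi) \doteq \int_{-1}^{\xi} c(R(t,y))\,\d y - \int_{\xi}^1 c(R(t,y))\,\d y,
\]
so that $F(t,\xi^n(t))=0$ along the trajectory. Because the ansatz $R_{I_0+\frac{1}{2}}\equiv 0$ places a vacuum around the turning point and $c(0)=1$ by (C), one computes $F_\xi(t,\xi^n)=2\,c(R(t,\xi^n))=2$, so the implicit function theorem yields $2\dot{\xi}^n = -F_t(t,\xi^n)$, where $F_t$ is the partial derivative in $t$ taken at $\xi$ fixed. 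The remainder of the proof is the evaluation of $F_t$.

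Using the piecewise-constant structure \eqref{eq:density} and the choice of $I_-,I_0,I_+$, one expands $F$ as a sum of the left boundary term $(x_{I_-}+1)c(R_{I_--\frac{1}{2}})$, bulk contributions $(x_{i+1}-x_i)c(R_{i+\frac{1}{2}})$ for $i=I_-,\ldots,I_0-1$, the two affine vacuum pieces $\xi^n-x_{I_0}$ and $-(x_{I_0+1}-\xi^n)$ (with coefficient $c(0)=1$ and opposite signs inherited from the definition of $F$), the mirrored bulk contributions on the right with overall minus sign, and the right boundary term $-(1-x_{I_+})c(R_{I_++\frac{1}{2}})$. Differentiating a generic bulk term and invoking \eqref{eq:DevinTownsend} yields the key collapse
\[
\frac{d}{dt}\bigl[(x_{i+1}-x_i)\,c(R_{i+\frac{1}{2}})\bigr]
=(\dot{x}_{i+1}-\dot{x}_i)\bigl[c(R_{i+\frac{1}{2}})-c'(R_{i+\frac{1}{2}})R_{i+\frac{1}{2}}\bigr]
=(\dot{x}_{i+1}-\dot{x}_i)\,\Upsilon(R_{i+\frac{1}{2}}),
\]
with $\Upsilon$ as in \eqref{eq:upsilon}, so the function $\Upsilon$ appears naturally from the combination of $c$ and $c'\rho$.

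Substituting the follow-the-leader velocities from \eqref{eq:DevinTownsend2} one obtains $\dot{x}_{i+1}-\dot{x}_i=v(R_{i-\frac{1}{2}})-v(R_{i+\frac{1}{2}})$ on the left block and $v(R_{i+\frac{3}{2}})-v(R_{i+\frac{1}{2}})$ on the right. An Abel summation by parts (shift of summation index by one) then transforms each bulk sum into the telescoped form $\sum v(R_{i-\frac{1}{2}})[\Upsilon(R_{i-\frac{1}{2}})-\Upsilon(R_{i+\frac{1}{2}})]$ and its mirror appearing in the statement, at the cost of endpoint residuals at $I_-$, $I_+$, $I_0$, and $I_0+1$. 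At the vacuum seams, these residuals combine with the contributions $-v(R_{I_0-\frac{1}{2}})$ and $+v(R_{I_0+\frac{3}{2}})$ arising from the $t$-derivative of the affine vacuum pieces, recombining exactly into the brackets $-v(R_{I_0-\frac{1}{2}})[1-\Upsilon(R_{I_0-\frac{1}{2}})]$ and $+v(R_{I_0+\frac{3}{2}})[1-\Upsilon(R_{I_0+\frac{3}{2}})]$ of the statement.

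Finally, the two boundary segments require separate care because $\pm 1$ are fixed while only $x_{I_-}$, $x_{I_+}$ move. Direct differentiation of $(x_{I_-}+1)c(R_{I_--\frac{1}{2}})$, substitution of $\dot{R}_{I_--\frac{1}{2}}$ from \eqref{eq:DevinTownsend} and of $\dot{x}_{I_-}=-v(R_{I_--\frac{1}{2}})$, $\dot{x}_{I_--1}=-v(R_{I_--\frac{3}{2}})$ from \eqref{eq:DevinTownsend2}, followed by the splitting $c=\Upsilon+c'\rho$, produces the left bracket of the claim plus an overflow term $v(R_{I_--\frac{1}{2}})\Upsilon(R_{I_--\frac{1}{2}})$ that precisely cancels the left endpoint residual left over from the bulk telescoping. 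The right boundary is symmetric. The main obstacle is this bookkeeping at the four seams where bulk, vacuum, and boundary pieces meet; once the cancellations are verified, collecting the terms of $-F_t$ yields the stated expression for $2\dot{\xi}^n$.
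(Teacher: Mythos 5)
Your proposal is correct and follows essentially the same route as the paper: you differentiate the discretized cost-balance identity \eqref{eq:equilibriocosto} (your implicit-function-theorem step with $F_\xi=2$ is just the paper's observation that $\dot{\xi}^n$ enters with coefficient $2$ because of the vacuum around the turning point and $c(0)=1$), use \eqref{eq:DevinTownsend} to collapse each bulk term into $(\dot{x}_{i+1}-\dot{x}_i)\,\Upsilon(R_{i+\frac{1}{2}})$, sum by parts, and substitute the follow-the-leader velocities \eqref{eq:FTL}, with the boundary and vacuum-seam bookkeeping handled exactly as in the paper's computation. No gaps.
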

\begin{proof}
By \eqref{eq:equilibriocosto} we have that
\[
\begin{split}
&
\left[x_{I_-}+1\right] c(R_{I_--\frac{1}{2}})
+\sum_{i=I_-}^{I_0-1} \left[\vphantom{x_{I_-}} x_{i+1}-x_i\right] c(R_{i+\frac{1}{2}})
+\left[\vphantom{x_{I_-}} \xi^n-x_{I_0}\right]
\\
=&
\left[\vphantom{x_{I_-}} x_{I_0+1}-\xi^n\right]
+\sum_{i=I_0+1}^{I_+-1} \left[\vphantom{x_{I_-}} x_{i+1}-x_i\right] c(R_{i+\frac{1}{2}})
+\left[\vphantom{x_{I_-}} 1-x_{I_+}\right] c(R_{I_++\frac{1}{2}}).
\end{split}
\]
Take the time derivative of the above equation
\begin{align*}
&\dot{x}_{I_-} \, c(R_{I_--\frac{1}{2}})
+\left[x_{I_-}+1\right] c'(R_{I_--\frac{1}{2}}) \, \dot{R}_{I_--\frac{1}{2}}\\
&+\sum_{i=I_-}^{I_0-1} \left\{
\left[\vphantom{x_{I_-}} \dot{x}_{i+1}-\dot{x}_i\right] c(R_{i+\frac{1}{2}})
+\left[\vphantom{x_{I_-}} x_{i+1}-x_i\right] c'(R_{i+\frac{1}{2}}) \, \dot{R}_{i+\frac{1}{2}}
\right\}
+\left[\vphantom{x_{I_-}} \dot{\xi}^n-\dot{x}_{I_0}\right]
\\
=&
\left[\vphantom{x_{I_-}} \dot{x}_{I_0+1}-\dot{\xi}^n\right]
+\sum_{i=I_0+1}^{I_+-1} \left\{
\left[\vphantom{x_{I_-}} \dot{x}_{i+1}-\dot{x}_i\right] c(R_{i+\frac{1}{2}})
+\left[\vphantom{x_{I_-}} x_{i+1}-x_i\right] c'(R_{i+\frac{1}{2}}) \, \dot{R}_{i+\frac{1}{2}}
\right\}
\\
&-\dot{x}_{I_+} \, c(R_{I_++\frac{1}{2}})
+\left[\vphantom{x_{I_-}} 1-x_{I_+}\right] c'(R_{I_++\frac{1}{2}}) \, \dot{R}_{I_++\frac{1}{2}},
\end{align*}
namely
\begin{align*}
2 \, \dot{\xi}^n=&
-\dot{x}_{I_-} \, c(R_{I_--\frac{1}{2}})
-\left[x_{I_-}+1\right] c'(R_{I_--\frac{1}{2}}) \, \dot{R}_{I_--\frac{1}{2}}\\
&-\sum_{i=I_-}^{I_0-1} \left\{
\left[\vphantom{x_{I_-}} \dot{x}_{i+1}-\dot{x}_i\right] c(R_{i+\frac{1}{2}})
+\left[\vphantom{x_{I_-}} x_{i+1}-x_i\right] c'(R_{i+\frac{1}{2}}) \, \dot{R}_{i+\frac{1}{2}}
\right\}
+\dot{x}_{I_0}
\\
&+\dot{x}_{I_0+1}
+\sum_{i=I_0+1}^{I_+-1} \left\{
\left[\vphantom{x_{I_-}} \dot{x}_{i+1}-\dot{x}_i\right] c(R_{i+\frac{1}{2}})
+\left[\vphantom{x_{I_-}} x_{i+1}-x_i\right] c'(R_{i+\frac{1}{2}}) \, \dot{R}_{i+\frac{1}{2}}
\right\}
\\
&-\dot{x}_{I_+} \, c(R_{I_++\frac{1}{2}})
+\left[\vphantom{x_{I_-}} 1-x_{I_+}\right] c'(R_{I_++\frac{1}{2}}) \, \dot{R}_{I_++\frac{1}{2}}.
\end{align*}
Hence, from  \eqref{eq:DevinTownsend} and \eqref{eq:DevinTownsend2} we obtain
\begin{align*}
2 \, \dot{\xi}^n
=&-\dot{x}_{I_-} \, c(R_{I_--\frac{1}{2}})
+\left[x_{I_-}+1\right] c'(R_{I_--\frac{1}{2}}) \, R_{I_--\frac{1}{2}} \, \frac{\dot{x}_{I_-}-\dot{x}_{I_--1}}{x_{I_-}-x_{I_--1}}\\
&-\sum_{i=I_-}^{I_0-1} \left[\vphantom{x_{I_-}} \dot{x}_{i+1}-\dot{x}_i\right] \Upsilon(R_{i+\frac{1}{2}})
+\dot{x}_{I_0}
+\dot{x}_{I_0+1}
+\sum_{i=I_0+1}^{I_+-1} \left[\vphantom{x_{I_-}} \dot{x}_{i+1}-\dot{x}_i\right] \Upsilon(R_{i+\frac{1}{2}})
\\
&-\dot{x}_{I_+} \, c(R_{I_++\frac{1}{2}})
-\left[\vphantom{x_{I_-}} 1-x_{I_+}\right] c'(R_{I_++\frac{1}{2}}) \, R_{I_++\frac{1}{2}} \, \frac{\dot{x}_{I_++1}-\dot{x}_{I_+}}{x_{I_++1}-x_{I_+}}
\\
=&\left\{
\left[x_{I_-}+1\right]
\, \frac{\dot{x}_{I_-}-\dot{x}_{I_--1}}{x_{I_-}-x_{I_--1}}
- \dot{x}_{I_-}
\right\}
c'(R_{I_--\frac{1}{2}}) \, R_{I_--\frac{1}{2}}
\\
&-\sum_{i=I_-}^{I_0} \dot{x}_{i} \, \Upsilon(R_{i-\frac{1}{2}})
+\sum_{i=I_-}^{I_0-1} \dot{x}_{i} \, \Upsilon(R_{i+\frac{1}{2}})
+\dot{x}_{I_0}\\
&+\dot{x}_{I_0+1}
+\sum_{i=I_0+2}^{I_+} \dot{x}_{i} \, \Upsilon(R_{i-\frac{1}{2}})
-\sum_{i=I_0+1}^{I_+} \dot{x}_{i} \, \Upsilon(R_{i+\frac{1}{2}})\\
&-\left\{ \dot{x}_{I_+} +\left[\vphantom{x_{I_-}} 1-x_{I_+}\right] \, \frac{\dot{x}_{I_++1}-\dot{x}_{I_+}}{x_{I_++1}-x_{I_+}} \right\}
c'(R_{I_++\frac{1}{2}}) \, R_{I_++\frac{1}{2}}.
\end{align*}
Finally, by applying \eqref{eq:FTL} we conclude the proof.
\end{proof}

\subsection{Proof of Theorem~\ref{teo:1}}\label{sec:mainproof1}

Fix an initial datum $\bar\rho$ in $\mathcal{S}$.
In \cite{AmadoriGoatinRosini} it is proven that the function $\rho \in \mathcal{S}$ defined for $x \in [0,1)$ as the entropy solution to the initial-boundary value problem
\[
\begin{cases}
\rho_t + f(\rho)_x =0 & \text{if } t>0,~ x \in (0,1),\\
\rho(t,0) = \rho(t,1) = 0 & \text{if }  t >0,\\
\rho(0,x) = \bar\rho(x) & \text{if }  x \in (0,1),
\end{cases}
\]
is an entropy solution of \eqref{eq:model} in the sense of Definition~\ref{def:entropy_solution}, with $\xi \equiv 0$.
Moreover, as already pointed out in the introduction, the entropy solution to the above initial-boundary problem coincides with the entropy solution to the Cauchy problem
\[
\begin{cases}
\rho_t + f(\rho)_x =0 & \text{if }  t>0,~ x \in \R,\\
\rho(0,x) = \bar\rho_e(x) & \text{if } x \in \R,
\end{cases}
\]
where $\bar{\rho}_e$ is obtained from $\bar{\rho}$ by extending it to zero outside $(-1,1)$.
Now, adopting the atomization procedure described in subsection \ref{sec:particle}, it is clear that the discrete turning point will be located at zero for all times due to the symmetry of the particle system. Therefore, the particles will split into two sets which will not change in time, with the two particles nearest the turning point getting further and further away from each other. Hence, by the results in \cite{DF_rosini}, we obtain that the FTL-Hughes model \eqref{eq:discrete_Hughes} converges to the entropy solution of the above Cauchy problem as $m$ goes to zero and $N$ to infinity, and this concludes the proof.

\subsection{Proof of Theorem~\ref{teo:2}}\label{sec:mainproof2}

Observe that the function $\rho \mapsto \, \Upsilon(\rho)$ defined in \eqref{eq:upsilon} is strictly decreasing, indeed
\begin{align*}
&\, \Upsilon'(\rho) = \left[c\left(\rho\right) - c'\left(\rho\right) \rho\right]'
=
- c''\left(\rho\right) \rho < 0
&\text{for all } \rho \in \left(0,\rho_{\max}\right],
\end{align*}
because $c$ is assumed to be convex, and $\Upsilon(0)=1$.
Hence ${\tv}_c(\rho) \leq L \, \tv(\rho)$, where
\begin{align*}
&{\tv}_c(\rho) \doteq \tv(\Upsilon(\rho)) = \tv\left(c\left(\rho\right) - c'\left(\rho\right) \rho \right)\,,
\end{align*}
and $L$ is defined in \eqref{eq:lipschitz}.

\begin{proposition}\label{prop:1}
Let $L$ be as defined in \eqref{eq:lipschitz} and $C$ be as defined in \eqref{eq:Cgrande}. If
\begin{equation}\label{eq:condition1}
  \frac{v_{\max}}{2}
\left[\vphantom{\sum} L\, {\tv}(\bar{\rho})
+
3 \, C\right]
<
v(\rho_{\max}),
\end{equation}
then no particles change direction and the problem \eqref{eq:FTL} admits a global-in-time solution.
\end{proposition}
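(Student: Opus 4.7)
The plan is a continuity/bootstrap argument over the maximal time interval $[0,t^*)$ on which the FTL system \eqref{eq:FTL} remains well defined, i.e., on which the turning point $\xi^n$ has not yet collided with any particle. On this interval Lemma~\ref{lem:dotxi} applies, and the whole point is to bound $|\dot\xi^n(t)|$ above by $v(\rho_{\max})$. Indeed, since $v$ is decreasing and $R \le \rho_{\max}$, the two particles flanking the turning point satisfy $\dot{x}_{I_0} = -v(R_{I_0-1/2}) \le -v(\rho_{\max})$ and $\dot{x}_{I_0+1} = v(R_{I_0+3/2}) \ge v(\rho_{\max})$, so such a bound on $\dot\xi^n$ forces both $\xi^n - x_{I_0}$ and $x_{I_0+1} - \xi^n$ to be strictly increasing. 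This immediately rules out any collision of $\xi^n$ with a particle and hence any direction change; by the same token $t^*$ must equal $+\infty$.

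The first step is to repackage the explicit formula for $2\dot\xi^n$ in Lemma~\ref{lem:dotxi}. Because of the vacuum assumption $R_{I_0+\frac{1}{2}}\equiv 0$ and $\Upsilon(0)=1$, the two ``isolated'' terms $\mp\,v(R_{I_0\mp \frac{1}{2}})[1-\Upsilon(R_{I_0\mp\frac{1}{2}})]$ coincide with the missing $i=I_0$ and $i=I_0+1$ summands of the adjacent telescoping series, and can be absorbed into them to produce the compact expressions
\[
\sum_{i=I_-}^{I_0} v(R_{i-\frac{1}{2}})\bigl[\Upsilon(R_{i-\frac{1}{2}}) - \Upsilon(R_{i+\frac{1}{2}})\bigr], \qquad
\sum_{i=I_0+1}^{I_+} v(R_{i+\frac{1}{2}})\bigl[\Upsilon(R_{i-\frac{1}{2}}) - \Upsilon(R_{i+\frac{1}{2}})\bigr].
\]
Using $0\le v\le v_{\max}$ and the $L$-Lipschitz continuity of $\Upsilon$, the sum of their absolute values is at most $v_{\max}\,L\,\tv(R(t,\cdot))$. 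The two remaining ``boundary'' terms at the indices $I_-$ and $I_+$ are convex combinations of $v$-values multiplied by $c'(R_{I_\pm\mp\frac{1}{2}})\,R_{I_\pm\mp\frac{1}{2}} \le C$, so each is dominated by $v_{\max}\,C$.

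The second step is a uniform-in-time control of $\tv(R(t,\cdot))$. On each side of the turning point, the ODE system \eqref{eq:DevinTownsend2} reduces to a standard one-sided (backward, resp.\ forward) FTL scheme of the type analysed in~\cite{DF_rosini}, for which the discrete total variation is non-increasing. Adding up the two halves and accounting for the two jumps $R_{I_0\mp\frac{1}{2}}\!\to\!0$ entering the vacuum around $\xi^n$ (which can be bounded by a multiple of $C$), one arrives at
\[
|\dot\xi^n(t)| \le \frac{v_{\max}}{2}\bigl[L\,\tv(\bar\rho) + 3C\bigr] < v(\rho_{\max}),
\]
where the strict inequality is exactly the hypothesis \eqref{eq:condition1}. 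As explained in the first paragraph, this bound on $\dot\xi^n$ is incompatible with any collision of $\xi^n$ with a neighboring particle, so $[0,t^*)$ extends to $[0,+\infty)$ and no particle ever changes direction.

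The main obstacle I anticipate is the precise bookkeeping needed to produce \emph{exactly} the coefficient $3C$ appearing in \eqref{eq:condition1}. One has to track simultaneously (i) the convex-combination weights $\tfrac{x_{I_-}+1}{x_{I_-}-x_{I_--1}}$ and $\tfrac{1-x_{I_+}}{x_{I_++1}-x_{I_+}}$ governing the boundary terms in Lemma~\ref{lem:dotxi}, (ii) the two artificial jumps introduced by the vacuum layer around the turning point, and (iii) the discrepancy between $\tv(\bar R)$ and $\tv(\bar\rho)$ produced by the initial atomization. The TV-monotonicity of the one-sided FTL schemes on each half is by now classical, but keeping the estimates uniform after particles have left $[-1,1]$ and verifying that the three contributions above combine additively (and not multiplicatively) is where the subtlety lies.
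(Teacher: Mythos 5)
Your strategy is the same as the paper's: bound $|\dot\xi^n|$ through Lemma~\ref{lem:dotxi} together with the BV contraction of the one-sided follow-the-leader dynamics, compare it with the speeds $\dot x_{I_0}\le -v(\rho_{\max})$, $\dot x_{I_0+1}\ge v(\rho_{\max})$ of the two flanking particles, and conclude that the vacuum gap around $\xi^n$ only grows, so no crossing occurs and the system splits into two globally solvable FTL systems. Two of your ingredients should be attributed rather than assumed: the inequality $R\le\rho_{\max}$ along the evolution (which you use for the flanking speeds) and the global solvability of the two split systems are exactly the discrete maximum principle of \cite{DF_rosini} (Lemma~1 there), and the time-monotonicity of the discrete total variation is Proposition~5 of \cite{DF_rosini}; the paper invokes both explicitly.

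The one genuine gap is the constant in \eqref{eq:condition1}, precisely the point you flag as ``bookkeeping''. After absorbing the two turning-point terms into the telescoping sums and estimating everything by absolute values, your accounting yields two vacuum-interface contributions ($|1-\Upsilon(R_{I_0-\frac12})|\le C$ and $|1-\Upsilon(R_{I_0+\frac32})|\le C$) \emph{plus} two boundary contributions at $I_\pm$ (each $\le C$, or $\le 2C$ with the cruder bound), i.e.\ a total of at least $4C$, which is strictly worse than the $3C$ required by the hypothesis. The paper closes this by \emph{not} symmetrizing: it estimates $2\dot\xi^n$ from above and from below separately and discards the terms with favorable sign. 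For the upper bound, $-v(R_{I_0-\frac12})\bigl[1-\Upsilon(R_{I_0-\frac12})\bigr]\le 0$ (since $\Upsilon\le\Upsilon(0)=1$) and the $I_+$ boundary term carries an overall minus sign in front of a nonnegative convex combination, so both drop; only one vacuum term, bounded by $v_{\max}C$ via $1-\Upsilon(\rho)\le c'(\rho)\rho$, and one boundary term, bounded by $2v_{\max}C$, survive, giving $2\dot\xi^n\le v_{\max}\bigl[\tv_c(R)+3C\bigr]$, and symmetrically for the lower bound. With this sign argument (and the observation that the telescoping sums involve no vacuum jumps, so $\tv_c(R)\le L\,\tv(\bar\rho)$ by the BV contraction), your proof is complete; without it, the estimate you actually justify only proves the proposition under a strictly stronger smallness condition than \eqref{eq:condition1}.
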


\begin{proof}
Directly from Lemma~\ref{lem:dotxi} we deduce that
\[
|\dot{\xi}^n| \leq Q[\bar\rho]\,\]
with
\begin{equation}\label{eq:cone}
  Q[\bar\rho]\doteq
\frac{v_{\max}}{2}
\left[\vphantom{\int} L \,{\tv}(\bar{\rho})
+
3 \, C\right].
\end{equation}
Indeed, it is easy to obtain from the estimate proved in Lemma~\ref{lem:dotxi} that
\begin{align*}
2 \, \dot{\xi}^n
&\le
v_{\max} \left[
\tv_c(R)
+2 \, c'(R_{I_--\frac{1}{2}}) \, R_{I_--\frac{1}{2}}
+1-\Upsilon(R_{I_0+\frac{3}{2}})\right]
\\
&\le
v_{\max} \left[\vphantom{\sum} \tv_c(R)+3 \, C\right]
\end{align*}
and
\begin{align*}
2 \, \dot{\xi}^n
&\ge
-v_{\max} \left[
\tv_c(R)
+2 \, c'(R_{I_++\frac{1}{2}}) \, R_{I_++\frac{1}{2}}
+1-\Upsilon(R_{I_0-\frac{1}{2}})\right]
\\
&\ge
-v_{\max} \left[\vphantom{\sum} \tv_c(R)+3 \, C\right].
\end{align*}
Moreover, by the discrete maximum principle proven in~\cite[Lemma~1]{DF_rosini}, as long as no collisions occur between a particle and the turning point, we have that
\begin{align*}
&\dot{x}_{I_0} = -v\left(\dfrac{m}{x_{I_0}-x_{I_0-1}}\right)
\le -v\left(\max_{i\in\{1,\ldots,I_0\}}\left\{\frac{m}{\bar{x}_{i} - \bar{x}_{i-1}}\right\}\right)
\le -v(\rho_{\max}),\\
&\dot{x}_{I_0+1} = v\left(\dfrac{m}{x_{I_0+2}-x_{I_0+1}}\right)
\ge v\left(\max_{i\in\{I_0,\ldots,N-1\}}\left\{\frac{m}{\bar{x}_{i+1} - \bar{x}_{i}}\right\}\right)
\ge  v(\rho_{\max}).
\end{align*}
In conclusion we proved that the particles do not belong to the cone where $\xi^n$ evolves.
As a consequence, no particle crosses the turning curve, namely no particle changes direction.
Hence problem \eqref{eq:FTL} can be split in two follow-the-leader problems, for which global existence follows from~\cite[Lemma~1]{DF_rosini}. Moreover, the $\BV$ contraction estimate proven in \cite[Proposition 5]{DF_rosini} implies that the $\tv_c(R)\leq L\,\tv(\bar{\rho})$, and this concludes the proof.
\end{proof}

\begin{remark}
We remark that in the special case $c(\rho)=1/v(\rho)$ and $v(\rho)=1-\rho$, condition \eqref{eq:condition1} is satisfied only if $\rho_{\max}<\check{\rho}\sim 0.265$. This condition is analogous to the one obtained in \cite{AmadoriGoatinRosini}.
\end{remark}

In the $n\rightarrow +\infty$ limit it is clear that $\bar\xi^n$ converges to the unique $\bar{\xi}\in (-1,1)$ such that
\[\int_{-1}^{\bar\xi}c(\bar\rho(y)) \, {\d}y =\int_{\bar\xi}^1 c(\bar\rho(y)) \, {\d}y \,.\]
A crucial consequence of the result in Proposition \ref{prop:1} is that condition \eqref{eq:condition1} is independent of $n$. Let $Q[\bar\rho]$ be the constant defined in \eqref{eq:cone}, and consider the cone $\mathcal{C}\doteq \{(t,x) \in \R_+ \times \R \colon |x-\bar\xi| < Q[\bar\rho] \, t\}$. First of all we get $|\dot{\xi}^n|\leq Q[\bar\rho]$, which shows that the curve $\xi^n$ converges up to a subsequence strongly in $\C0([0,T];\R)$ for all $T\geq 0$ to some $\xi\in \C0([0,T];\R)$, and the limit turning curve $\mathcal{T}=\left \{(t,x) \in \R_+ \times \R \colon x=\xi(t)\right \}$  is entirely contained in $\mathcal{C}$. Since no particles are placed in $\mathcal{C}$, the discrete density $R(t,x)$ converges to zero strongly in $\Lloc1(\mathcal{C})$. By taking $\delta>0$ arbitrarily small, we can apply the same procedure of~\cite[Theorem~3]{DF_rosini} to prove that the discretized density $R(t,x)$ defined in \eqref{eq:density} converges strongly in $\Lloc1$ towards a function $\rho_R\in \L\infty([\delta,+\infty)\times \R)$ satisfying Kruzhkov's entropy condition \cite{Kruzhkov} for the conservation law $\rho_t+[\rho \, v(\rho)]_x = 0$ on $([\delta,+\infty)\times [0,+\infty))$. Similarly, $R(t,x)$ converges strongly in $\Lloc1$ towards a function $\rho_L\in \L\infty([\delta,+\infty)\times \R)$ satisfying Kruzhkov's entropy condition \cite{Kruzhkov} for the conservation law $\rho_t-[\rho \, v(\rho)]_x = 0$ on $([\delta,+\infty)\times (-\infty,0])$.
By setting
\[\rho(t,x)=
\begin{cases} \rho_L(t,x) & \text{if } x<\xi(t), \\
\rho_R(t,x) & \text{if } x>\xi(t),
\end{cases}\]
we can integrate \eqref{eq:equilibriocosto} in time on $[0,+\infty)$, multiply by an arbitrary test function $\varphi\in \Cc0([0,+\infty))$, and get in the $n\rightarrow +\infty$ limit (up to a subsequence)
\[\int_{0}^{+\infty}\left[\int_{-1}^{\xi(t)}c(\rho(t,y))\, {\d}y -\int_{\xi(t)}^1 c(\rho(t,y))\, {\d}y\right]\varphi(t) \, {\d} t = 0\,,\]
which shows that \eqref{eq:integral_equation} is satisfied for almost every $t\geq 0$.

In order to prove that $\rho$ is the unique entropy solution to \eqref{eq:model} according to Definition~\ref{def:entropy_solution}, we only need to prove that the initial condition $\bar\rho$ is achieved as $t\searrow 0$ at least in a weak measure sense. Now, it is immediately seen that $R(0,x)$ only differs from the initial atomization $\bar{R}(x)$ on the interval $[\bar{x}_{I_0},\bar{x}_{I_0+1})$. Moreover,
recalling the definition of $\bar{R}$ in \eqref{eq:R_bar} and of $R$ in (\ref{eq:density}), we have
\[\int_{\bar{x}_{I_0}}^{\bar{x}_{I_0+1}}|\bar{R}(y)-R(0,y)| \, \d y = \int_{\bar{x}_{I_0}}^{\bar{x}_{I_0+1}} \bar{R}(y) \,\d y\]
and the last term above clearly tends to zero as $n\rightarrow +\infty$. Since $\bar{R}(x)$ converges to $\bar\rho$ in the sense of measures as $n\rightarrow +\infty$, we have that also $R(0,x)$ approaches $\bar\rho$ as $n\rightarrow +\infty$. This concludes the proof of Theorem~\ref{teo:2}.

\subsection{Particle approximations of Riemann data}\label{sec:riemann}

In this subsection we consider the particle approximation of an initial condition
\[\bar\rho(x)=\begin{cases}
\rho_- & \text{if } x\in[-1,0], \\
\rho_+ & \text{if } x\in (0,1],
\end{cases}\]
with $\rho_-<\rho_+$. In order to simplify the computations, we first fix the number of particles $[-1,0)$ to be equal to $N^-$, and the number of particles on $(0,1]$ equal to $N^+$, with $N^+>N^-$. We call $N \doteq N^+ + N^-$. We then fix the particle mass $m>0$ such that $m<1/N^+$, and obtain the two Riemann values
\begin{align*}
&\rho_-= m \, N^-,&
&\rho_+= m \, N^+.
\end{align*}
We set the initial $N+1$ particle positions as follows
\begin{align*}
\bar{x}_0\doteq -1\,,&&
&\bar{x}_{i+1}\doteq\bar{x}_i + \frac{1}{N^-}\,,&
&i\in\{1,\ldots,N^- -2\}\,,\\
 \bar{x}_N\doteq 1\,,&&
&\bar{x}_i \doteq \bar{x}_{i+1}-\frac{1}{N^+}\,,&
&i \in \{ N^- ,\ldots, N-1\}\,.
\end{align*}
The initial position of the discrete turning point is computed as
\[\bar\xi =\frac{c(\rho_+)-c(\rho_-)}{2 \, c(\rho_+)}\in (0,1/2)\,.\]
In order to avoid the situation in which $\bar\xi$ coincides with the initial position of a particle, we remove that particle from the system in such a case. Lemma \ref{lem:dotxi} implies
\begin{align*}
\dot{\xi}(0) &=\frac{1}{2}\left\{ v_{\max}\left[c'(\rho_-)\rho_- - c'(\rho_+)\rho_+\right]+v(\rho_-)\left[\Upsilon(\rho_-)-\Upsilon(\rho_+)\right]\right\}\\
  & = \frac{1}{2}\left\{\left[v_{\max}-v(\rho_-)\right]\left[\Upsilon(\rho_+)-\Upsilon(\rho_-)\right] +v_{\max}\left[c(\rho_-)-c(\rho_+)\right]\right\}.
\end{align*}
The assumptions on $c$ and $v$ clearly imply that $\dot{\xi}(0)<0$. Therefore, the avoidance of a collision between the turning point $\xi(t)$ and a neighbor particle is guaranteed under the assumption
\begin{equation}\label{eq:riemann1}
  F(\rho_-,\rho_+)\doteq v_{\max}\left[c'(\rho_-)\rho_- - c'(\rho_+)\rho_+\right]+v(\rho_-)\left[\Upsilon(\rho_-)-\Upsilon(\rho_+)\right]+2 \, v(\rho_+)>0\,.
\end{equation}
It is immediately seen that condition \eqref{eq:riemann1} is satisfied when $\rho_+=\rho_-$ and $\rho_+<1$. On the other hand, if $\rho_-=0$ \eqref{eq:riemann1} may not hold.

In the special case
\begin{align*}
& c(\rho)=\frac{1}{v(\rho)},&
&v(\rho)=1-\rho,
\end{align*}
treated in \cite{DiFrancescoMarkowichPietschmannWolfram,AmadoriDiFrancesco}, the function $F$ in \eqref{eq:riemann1} reads
\[F(\rho_-,\rho_+)=\frac{(2-\rho^+)(1-2\rho^+)}{1-\rho^+}+\frac{(\rho^-)^2}{(1-\rho^-)^2}+\frac{\rho^-(1-2\rho^+)}{(1-\rho^+)^2}\,.\]
Let us test condition \eqref{eq:riemann1} on $\rho_-=0$. We get
\[0<F(0,\rho_+) =\frac{(2-\rho^+)(1-2\rho^+)}{1-\rho^+}\,,\]
and the above is satisfied on $\rho_+\in [0,1]$ only if
\[\rho_+<\frac{1}{2}\,.\]
We now study the monotonicity of $F$ with respect to $\rho_-$. We get
\[\frac{\partial F}{\partial \rho_-} (\rho_-,\rho_+)= \Upsilon(\rho_+)-\Upsilon(\rho_-) +c'(\rho_-) +\rho_-^2 \, c''(\rho_-)\,.\]
Hence, $\frac{\partial F}{\partial \rho_-} (\rho_-,\rho_+)>0$ if and only if
\[\Upsilon(\rho_+)>G(\rho_-)\]
with
\[G(\rho_-) \doteq c(\rho_-)-\rho_- \, c'(\rho_-) -c'(\rho_-) -\rho_-^2 \, c''(\rho_-) = -\frac{2 \, \rho_-}{(1-\rho_-)^3}\,.\]
Now, a quick inspection shows that $\Upsilon(\rho)>G(\rho)$ for all $\rho\in [0,1)$. Moreover, since $\Upsilon(\rho)=\frac{1-2\rho}{(1-\rho)^2}$ in this case, we infer the existence of a curve $\rho_+=\varphi(\rho_-)$ with $\varphi(0)=1/2$, $\varphi(1)=1$, and $\varphi'(\rho_-)>0$ on $\rho_-\in [0,1)$, such that \eqref{eq:riemann1} holds if and only if $\rho_-\leq \rho_+\leq \varphi(\rho_-)$. Such a condition is automatically satisfied if $\rho_+$ and $\rho_-$ are both less than $1/2$.

\section{Numerical simulations}\label{sec:numerics}

In this section we present some numerical simulations of the particle method described above, and compare the tests with classical methods available in the literature. More precisely, we solve the particle system \eqref{eq:FTL} using the Runge-Kutta MATLAB solver ODE23 with initial mesh sizes automatically determined by the total number of particles $N$ and the initial density values. We then compare the particle scheme simulations with solutions of \eqref{eq:hughes_continuum} obtained via Godunov scheme.

An important remark has to be stated about the boundary conditions. As described in section \ref{sec:particle}, we do not impose any boundary condition on the particle method. The two leading particles $x_0$ and $x_{N}$ move with maximal velocity towards the opposite directions. For the Godunov method the boundary conditions are assigned as follows. We create two extra ghost cells, one just at the left of $-1$ and one juts at the right of $1$. In all our numerical computation we set the value of $\rho$ in those cells to zero, to mimic `perfect exits'. In practice, any value that results in characteristic speeds pointing out of the domain will provide the solution, since the upwind scheme will not make use of such values (except to check that the characteristic actually point out of the domain). This procedure is used for convenience, since it simplifies the implementation of boundary conditions a lot.

Particular attention is devoted to the turning point evolution in the particle simulations, obtained by discretizing \eqref{eq:equilibriocosto}. Since no boundary conditions are imposed for the particle method, particles are free to exit the domain following the evolution of the two leaders, whereas only the particles still inside the domain bias the evolution of the turning point.

We test the atomization algorithm described in section~\ref{sec:particle} considering some  examples introduced in the literature for various initial data $\bar{\rho}$, see \cite{DiFrancescoMarkowichPietschmannWolfram} and \cite{El-KhatibGoatinRosini}.
In each of the example reported, the choice for the cost function is
\begin{align*}
& c(\rho)=\frac{1}{v(\rho)},&
&v(\rho)=1-\rho,
\end{align*}
and we show time evolution of the discrete density $R(t,x)$ \eqref{eq:density} in the domain $\Omega=\left(-1,1\right)$. In \figurename~\ref{fig:Test 1} and \figurename~\ref{fig:Test 2} we consider two constant initial conditions $\bar{\rho}(x)=0.25$ and $\bar{\rho}(x)=0.6$ respectively. In the latter case, as we expect, the simulation shows the formation of two rarefaction waves centered at $x=-1$ and $x=1$, with trace $R(t,\pm 1)\sim 1/2$. In the former case the rarefaction waves exit the boundary, hence we see three constant states inside $\Omega$.

\begin{figure}[htbp]
\begin{minipage}[l]{6cm}
\begin{center}
\includegraphics[width=6cm]{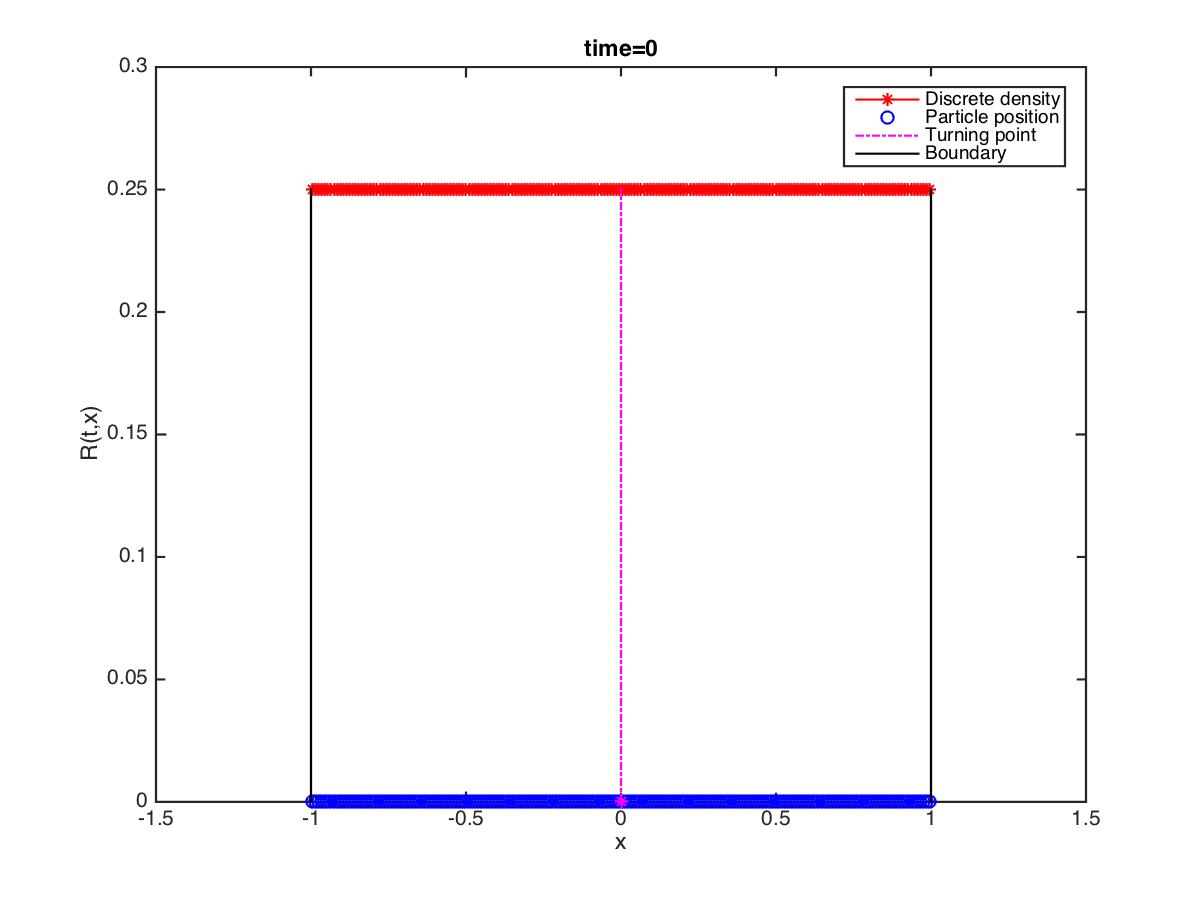}
\end{center}
\end{minipage}%
\begin{minipage}[c]{6cm}
\begin{center}
\includegraphics[width=6cm]{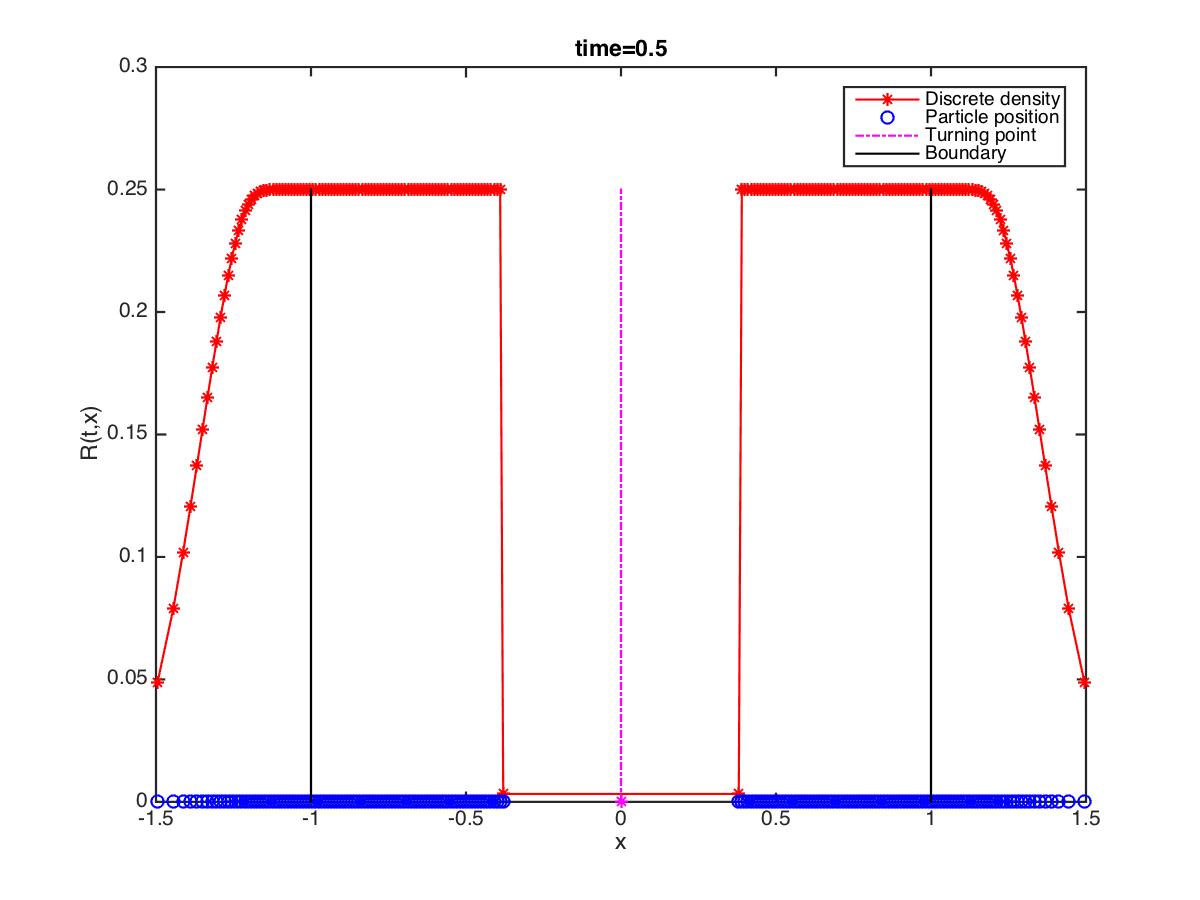}
\end{center}
\end{minipage}
\begin{minipage}[c]{6cm}
\begin{center}
\includegraphics[width=6cm]{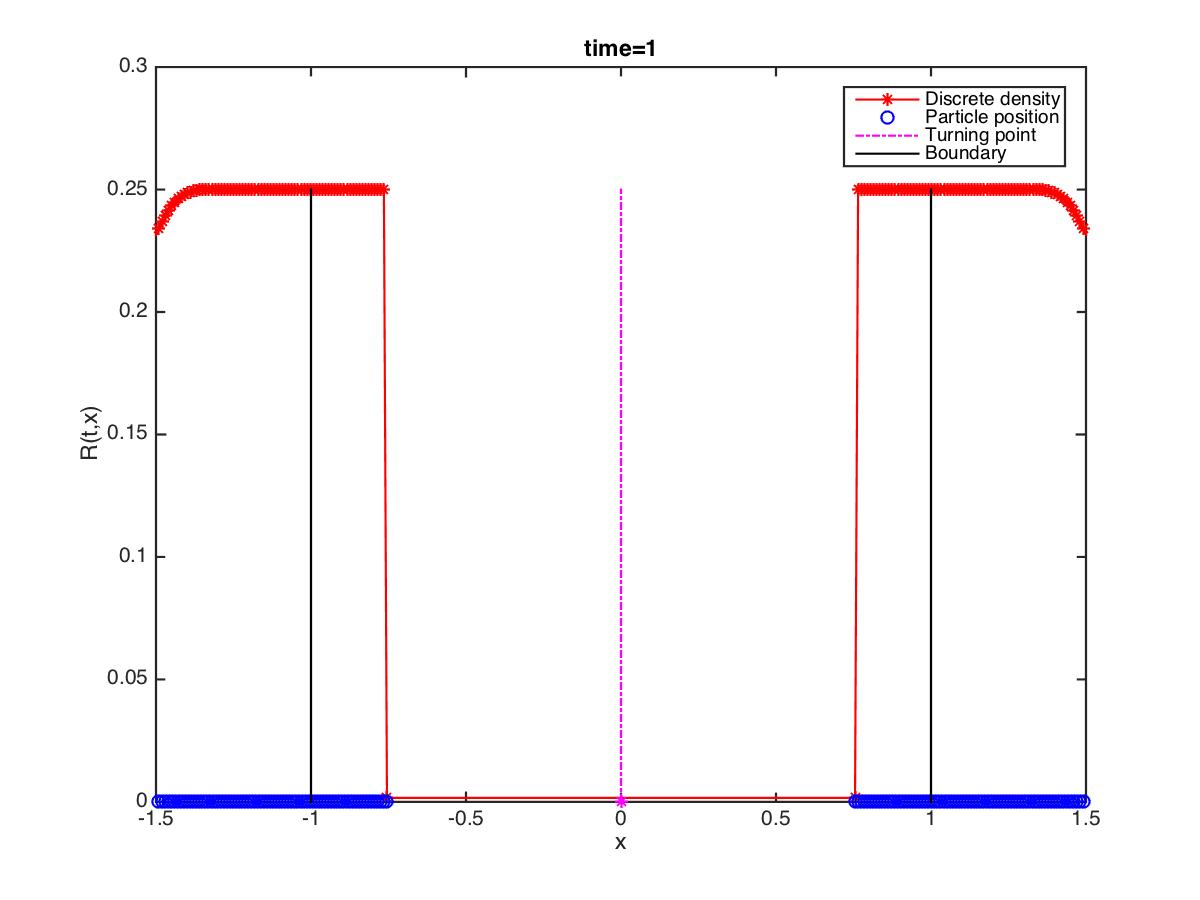}
\end{center}
\end{minipage}
\caption{Evolution of $R(t,x)$ with initial data $\bar{\rho}(x)=0.25$ at times $t=0$, $t=0.5$ and $t=1$. In the particle simulations the blu dots represent particles positions, whereas the red line is the discretized density. The magenta vertical line describes the turning point evolution. \label{fig:Test 1}}
\end{figure}
\begin{figure}[htbp]
\begin{minipage}[l]{6cm}
\begin{center}
\includegraphics[width=6cm]{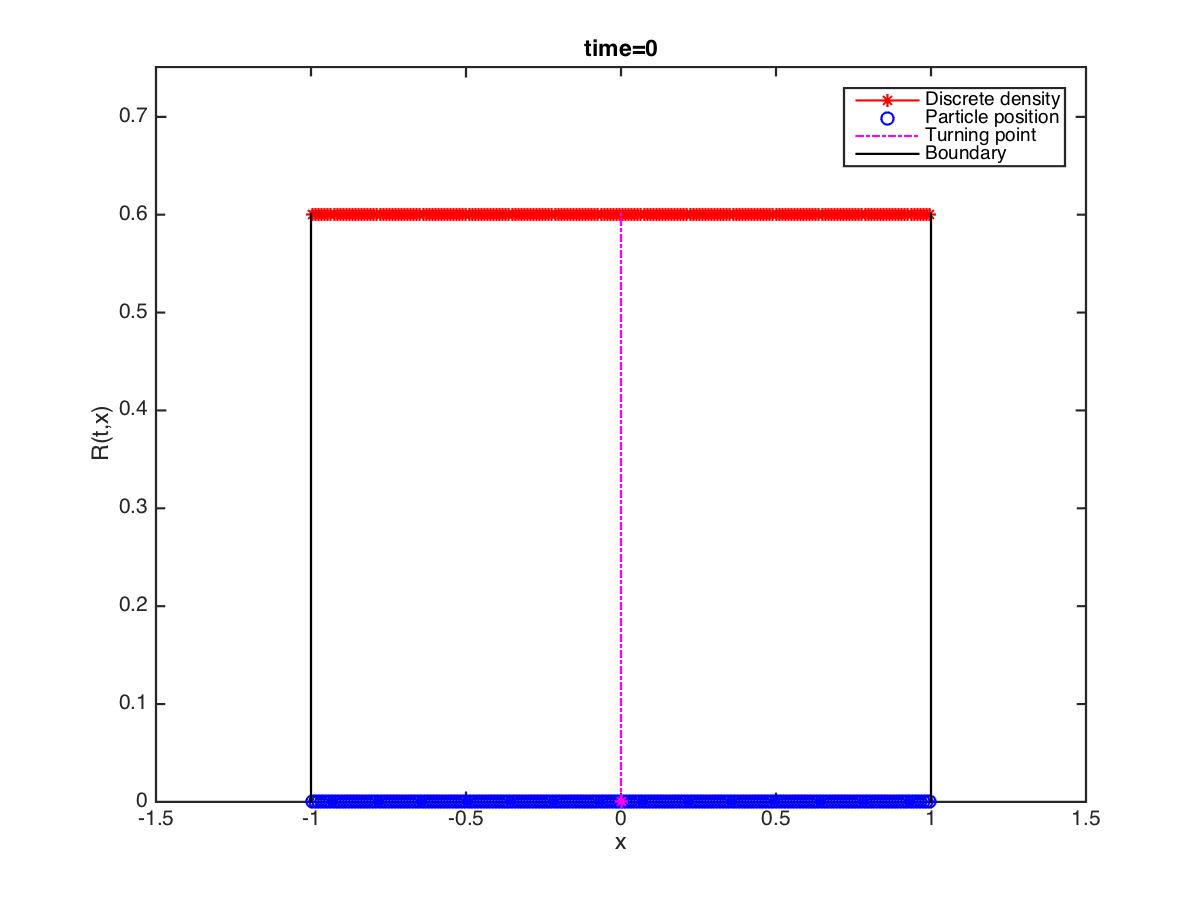}
\end{center}
\end{minipage}%
\begin{minipage}[c]{6cm}
\begin{center}
\includegraphics[width=6cm]{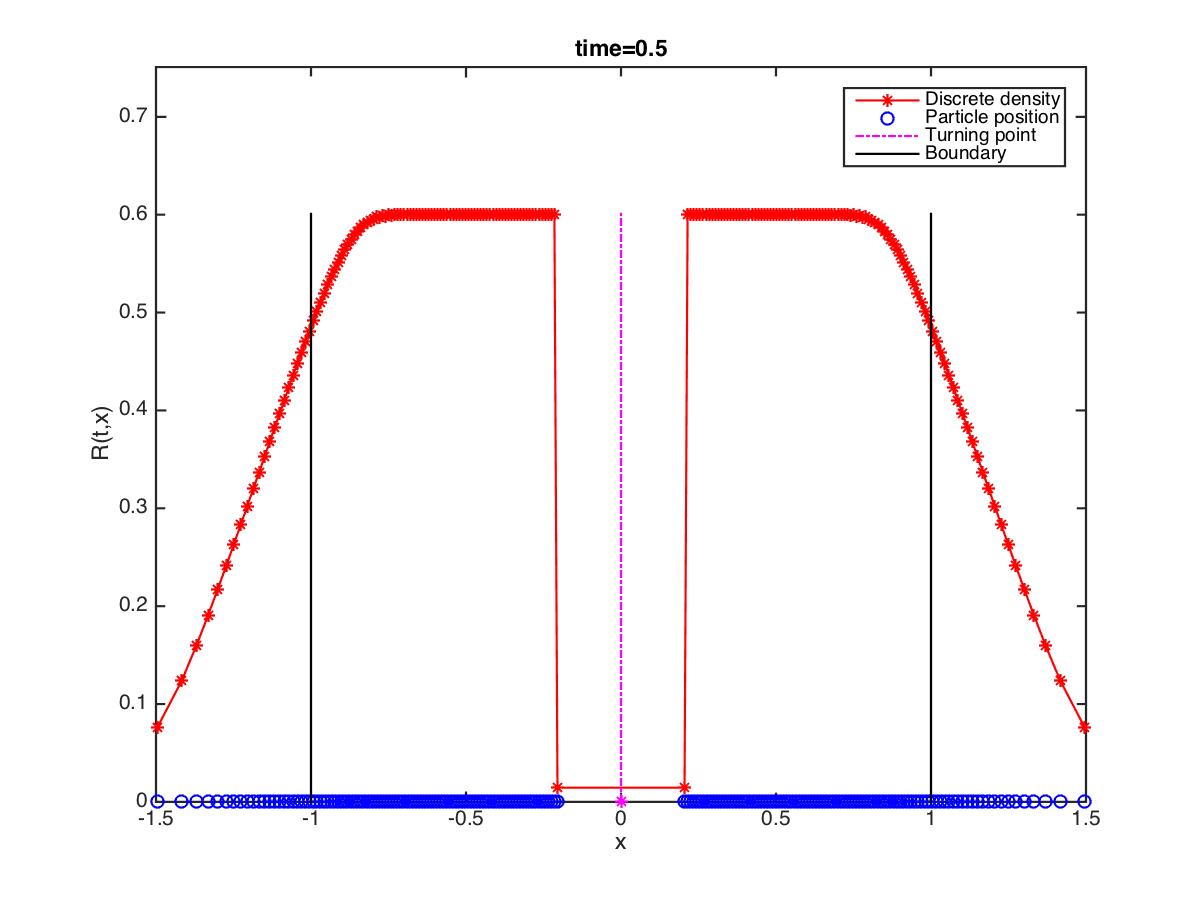}
\end{center}
\end{minipage}
\begin{minipage}[c]{6cm}
\begin{center}
\includegraphics[width=6cm]{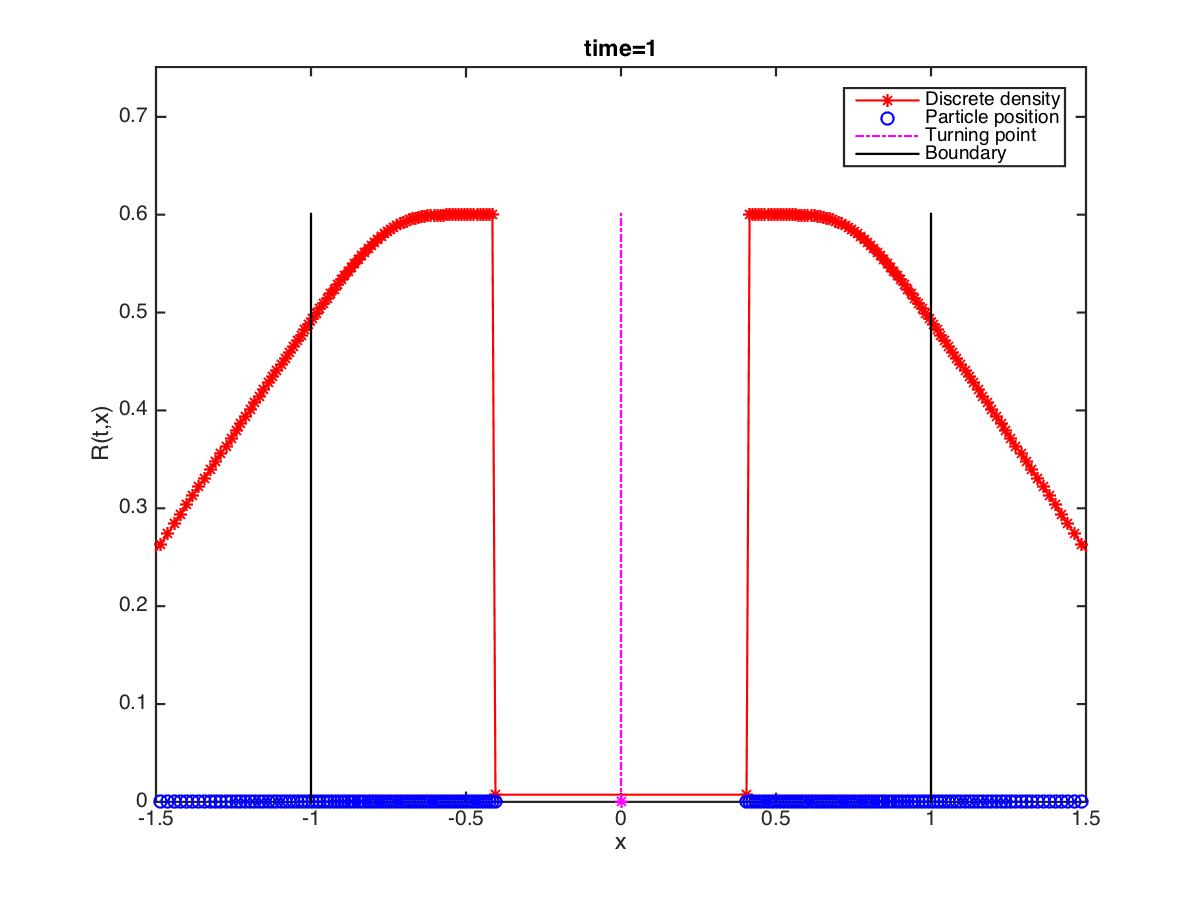}
\end{center}
\end{minipage}
\caption{Evolution of $R(t,x)$ with initial data $\bar{\rho}(x)=0.6$ at times $t=0$, $t=0.5$ and $t=1$. \label{fig:Test 2}}
\end{figure}
In \figurename~\ref{fig:Test 3} we consider the Riemann problem with initial condition
\begin{equation}\label{Rp}
\bar{\rho}(x)=\begin{cases}
 0.45 &\text{if } x\leq 0, \\
 0.55 &\text{if } x>0,
\end{cases}
\end{equation}
for which the condition \eqref{eq:riemann1} in section \ref{sec:riemann} is satisfied. In this case, a vacuum region is formed around the turning point. In \figurename~\ref{fig:Test 4} we consider the initial datum
\begin{equation}\label{Rp2}
\bar{\rho}(x)=\begin{cases}
 0.1 &\text{if } x\leq 0, \\
 0.9 &\text{if } x>0\,,
\end{cases}
\end{equation}
for which condition \eqref{eq:riemann1} is \emph{not} satisfied. In this case, the turning point collides with its left neighbour particle.
In order to compare our method with the tests performed in \cite{DiFrancescoMarkowichPietschmannWolfram,GoatinMimault}, in \figurename~\ref{fig:Test 5} we consider the three-step initial condition
\begin{equation}\label{Steps}
\bar{\rho}(x)=\begin{cases}
 0.8 &\text{if } -0.8\leq x\leq -0.5, \\
 0.6 &\text{if } -0.3\leq x\leq 0.3, \\
 0.9 &\text{if } 0.4\leq x\leq 0.75, \\
 0 & \text{otherwise}.
\end{cases}
\end{equation}
As shown in \figurename~\ref{fig:Test 4} and \figurename~\ref{fig:Masstr}, examples \eqref{Rp2} and \eqref{Steps}, exhibit the typical \emph{mass transfer} phenomenon occurring when the turning point is not surrounded by a vacuum region. In such a case, particles are crossing the turning point $\xi(t)$, and a \emph{non-classical shock} is formed around $\xi(t)$, see \cite[Remark 5]{AmadoriDiFrancesco}. These examples show that the particle method introduced in section \ref{sec:particle} has good chances to rigorously approximate the continuum solution to Hughes' model under less restrictive assumptions than the ones considered in theorems \ref{teo:1} and \ref{teo:2}. Indeed, a key requirement in order to have the particle scheme in section \ref{sec:particle} well posed and consistent in the limit is the \emph{avoidance of wild oscillations of the particle trajectories around the turning point}. Such a behaviour is ensured in the examples \eqref{Rp2} and \eqref{Steps}, and we believe that it can be obtained in a less restrictive setting for the initial conditions than the one we imposed in Theorems \ref{teo:1} and \ref{teo:2}. In support of this conjecture we performed several simulations with various initial conditions, and there is no evidence of such strange behavior. Of course a more detailed analysis is needed to rigorously prove convergence of the scheme.

In all the examples we set $N=200$ particles and plot the particle positions and the discrete densities. We highlight the interaction with the boundary and the evolution of the turning point.

\begin{figure}[htbp]
\begin{minipage}[l]{6cm}
\begin{center}
\includegraphics[width=6cm]{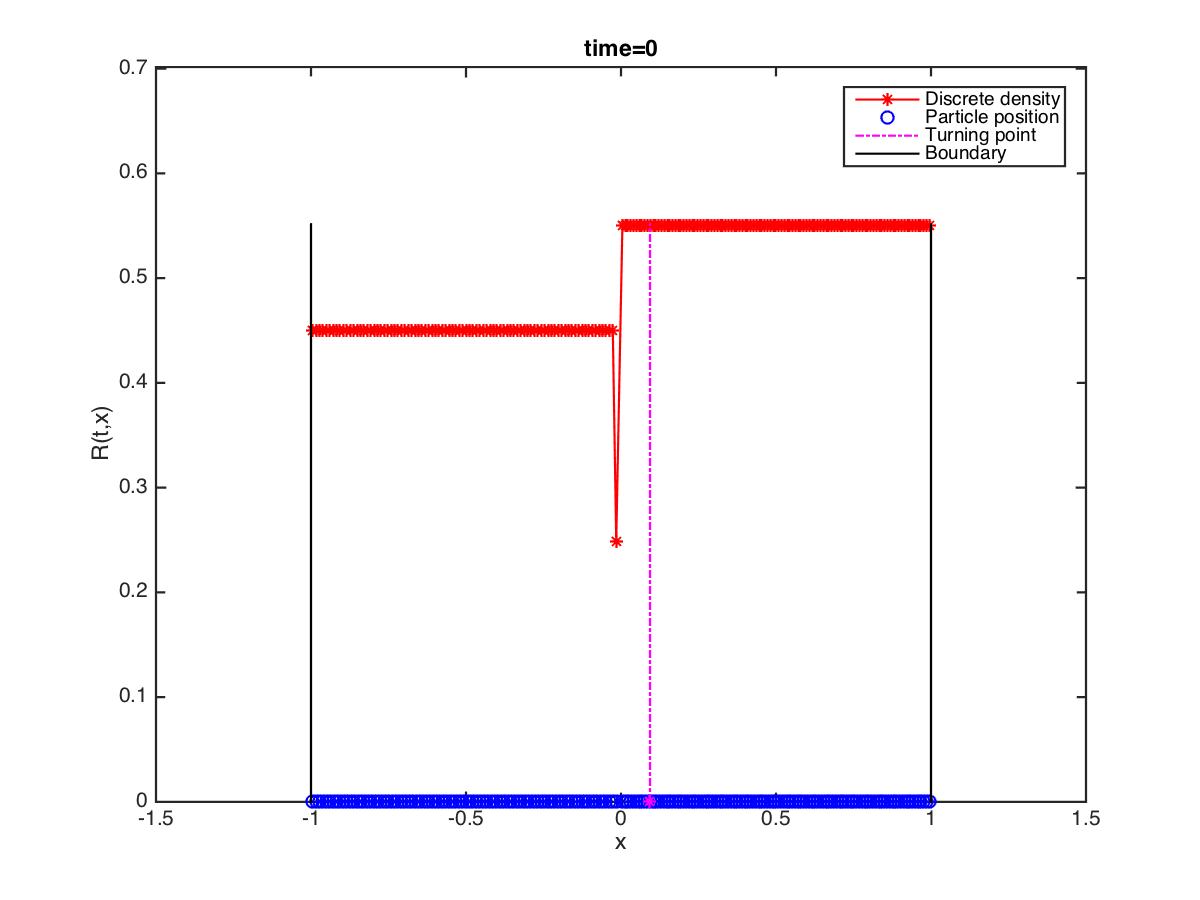}
\end{center}
\end{minipage}%
\begin{minipage}[c]{6cm}
\begin{center}
\includegraphics[width=6cm]{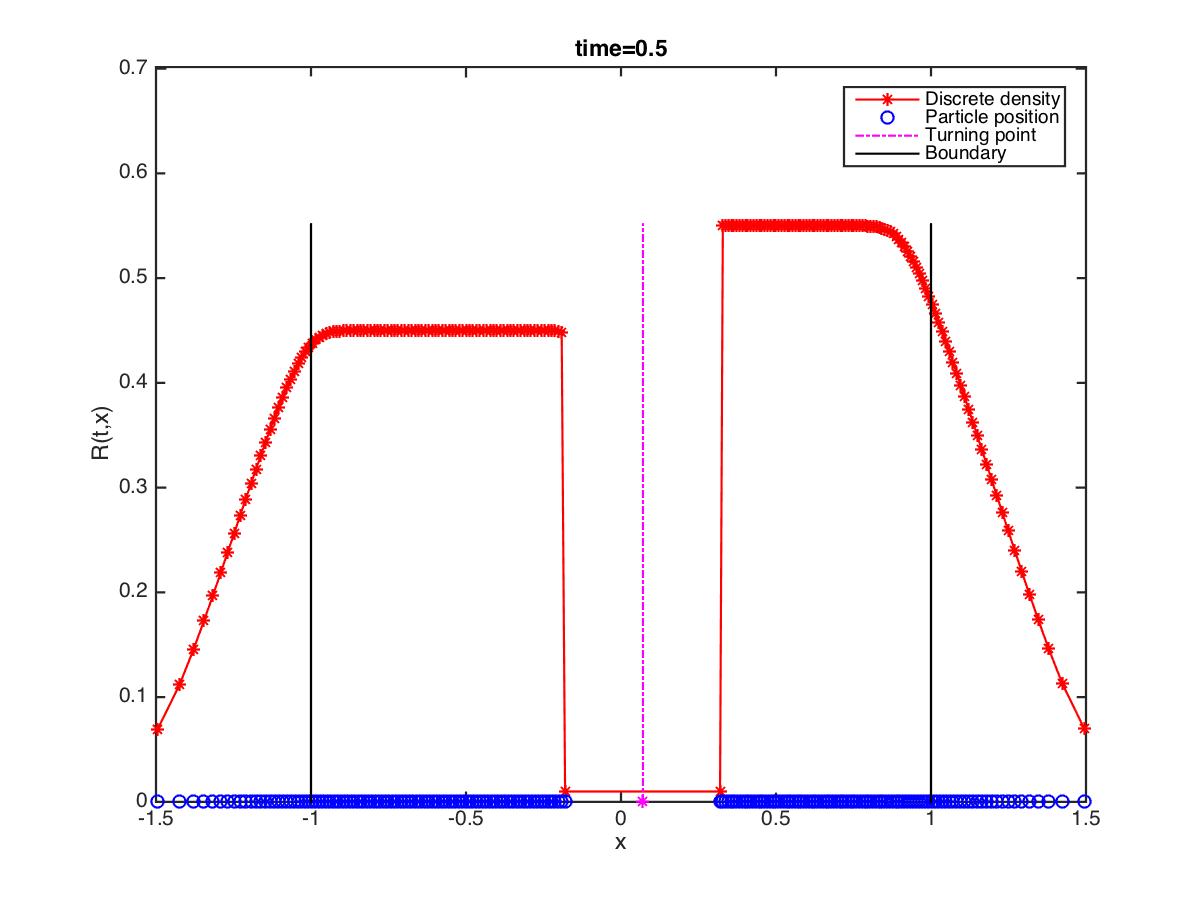}
\end{center}
\end{minipage}\\
\begin{minipage}[c]{6cm}
\begin{center}
\includegraphics[width=6cm]{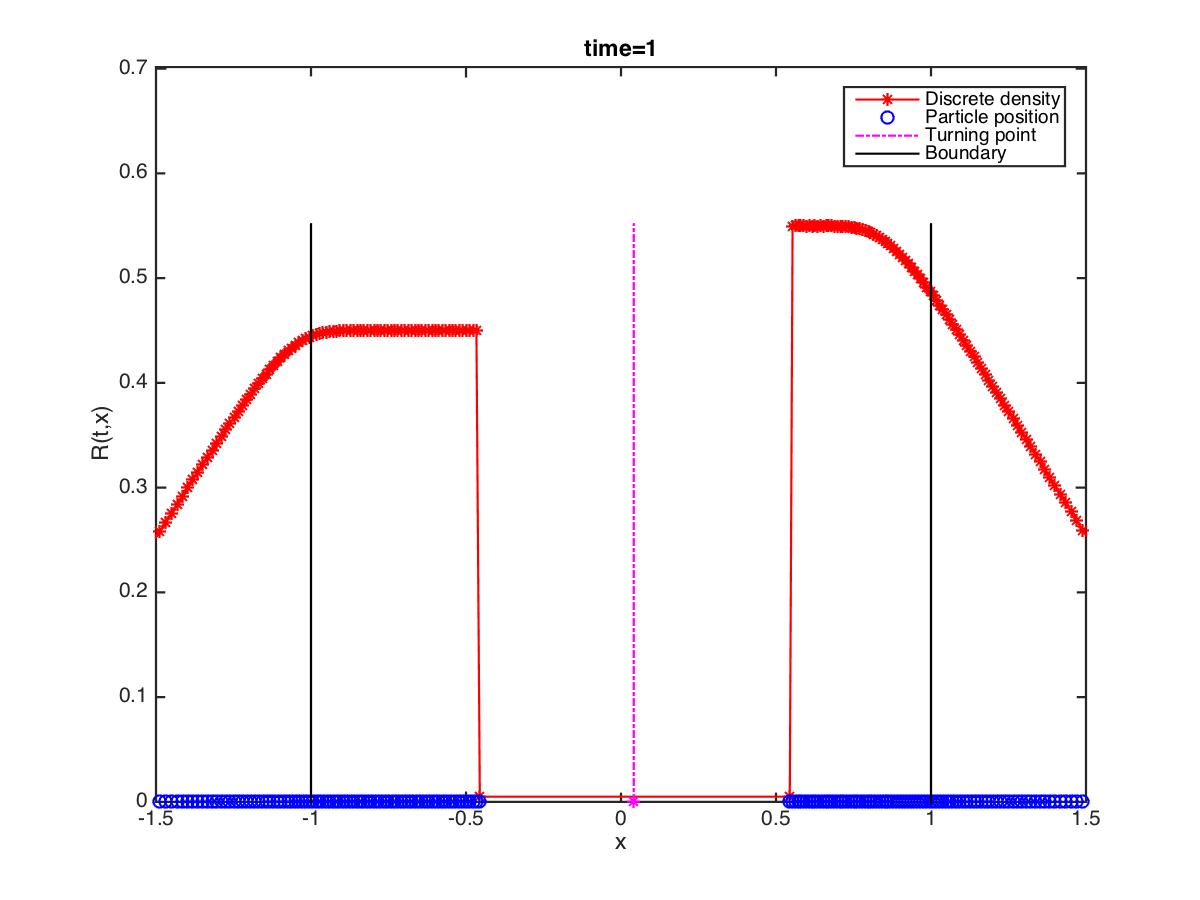}
\end{center}
\end{minipage}
\caption{Evolution of $R(t,x)$ with initial data $\bar{\rho}(x)$ given in \eqref{Rp}. \label{fig:Test 3}}
\end{figure}

\begin{figure}[htbp]
\begin{minipage}[l]{6cm}
\begin{center}
\includegraphics[width=6cm]{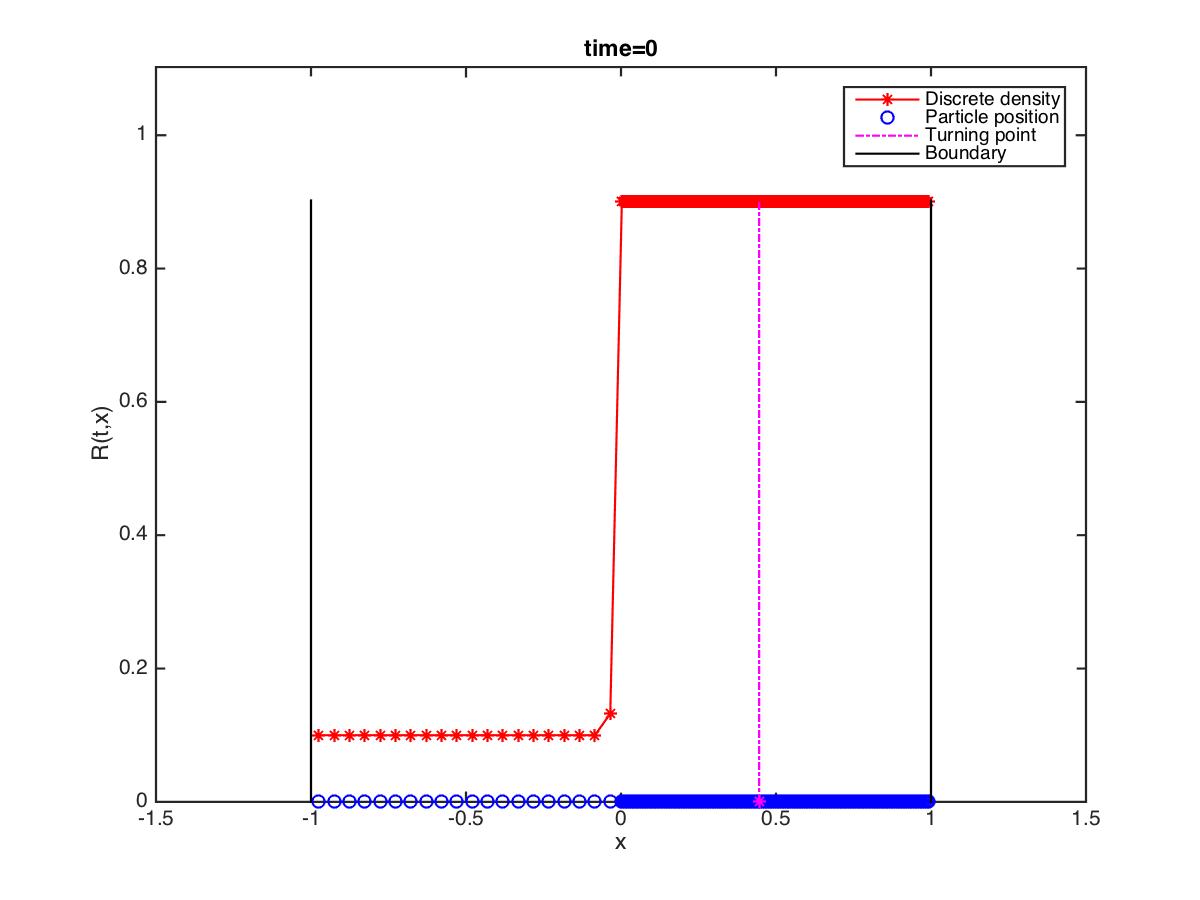}
\end{center}
\end{minipage}%
\begin{minipage}[c]{6cm}
\begin{center}
\includegraphics[width=6cm]{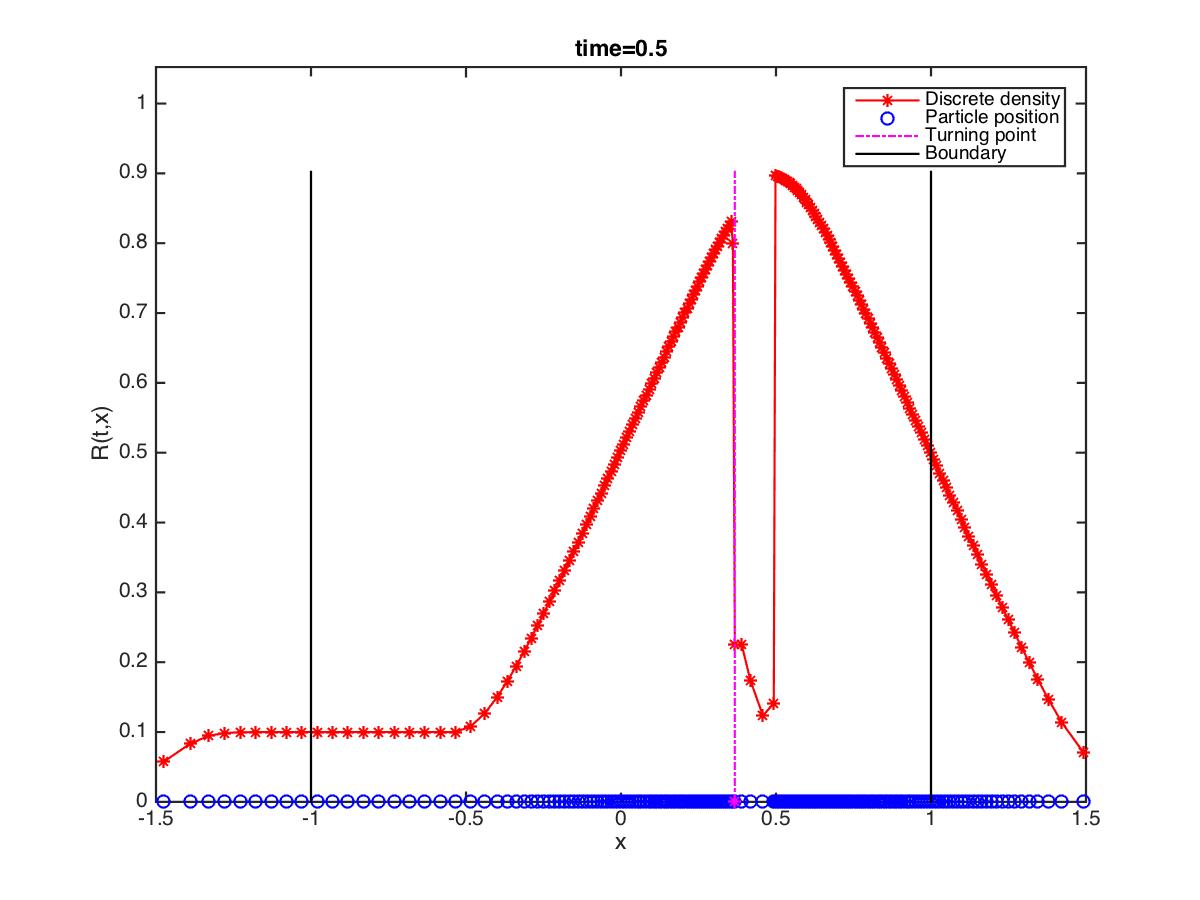}
\end{center}
\end{minipage}\\
\begin{minipage}[c]{6cm}
\begin{center}
\includegraphics[width=6cm]{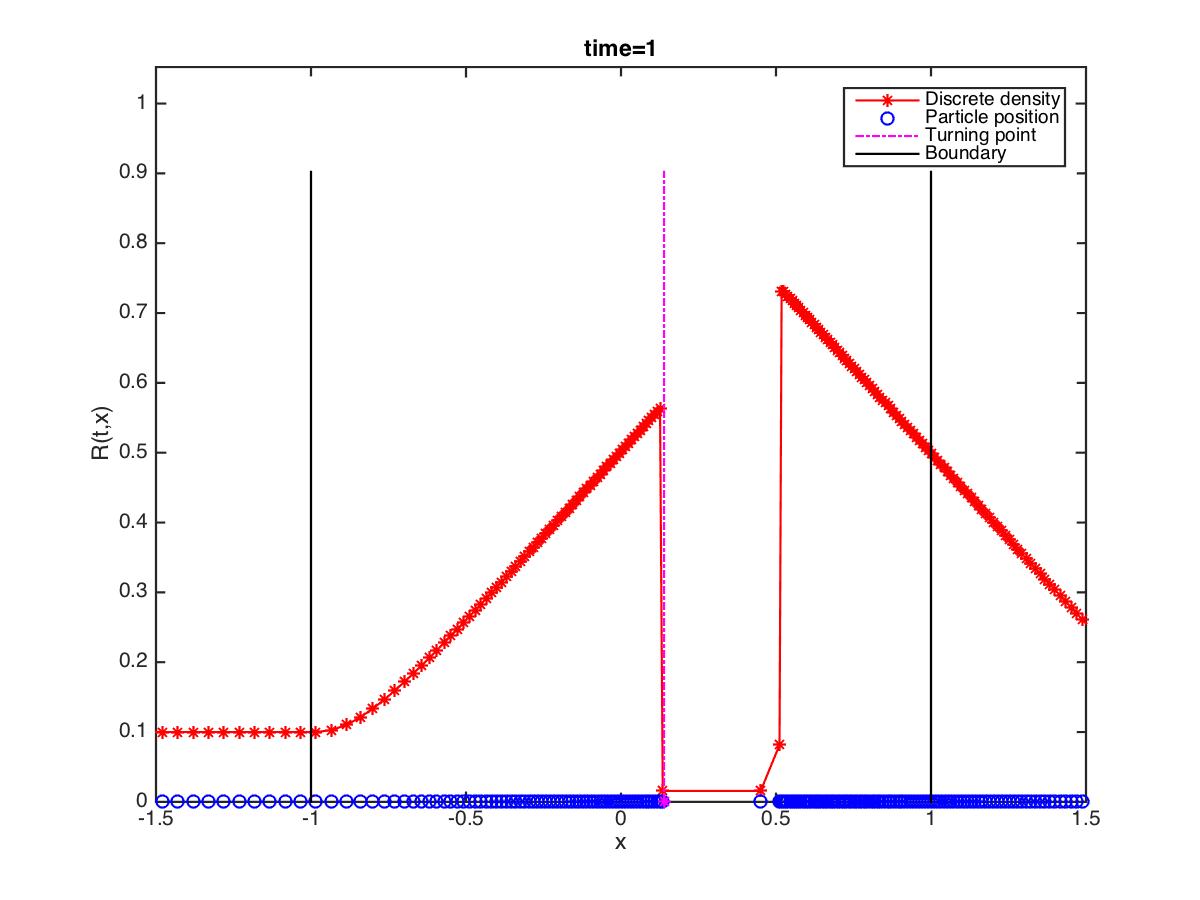}
\end{center}
\end{minipage}
\caption{Evolution of $R(t,x)$ with initial data $\bar{\rho}(x)$ given in \eqref{Rp2}. \label{fig:Test 4}}
\end{figure}

\begin{figure}[htbp]
\begin{minipage}[l]{6cm}
\begin{center}
\includegraphics[width=6cm]{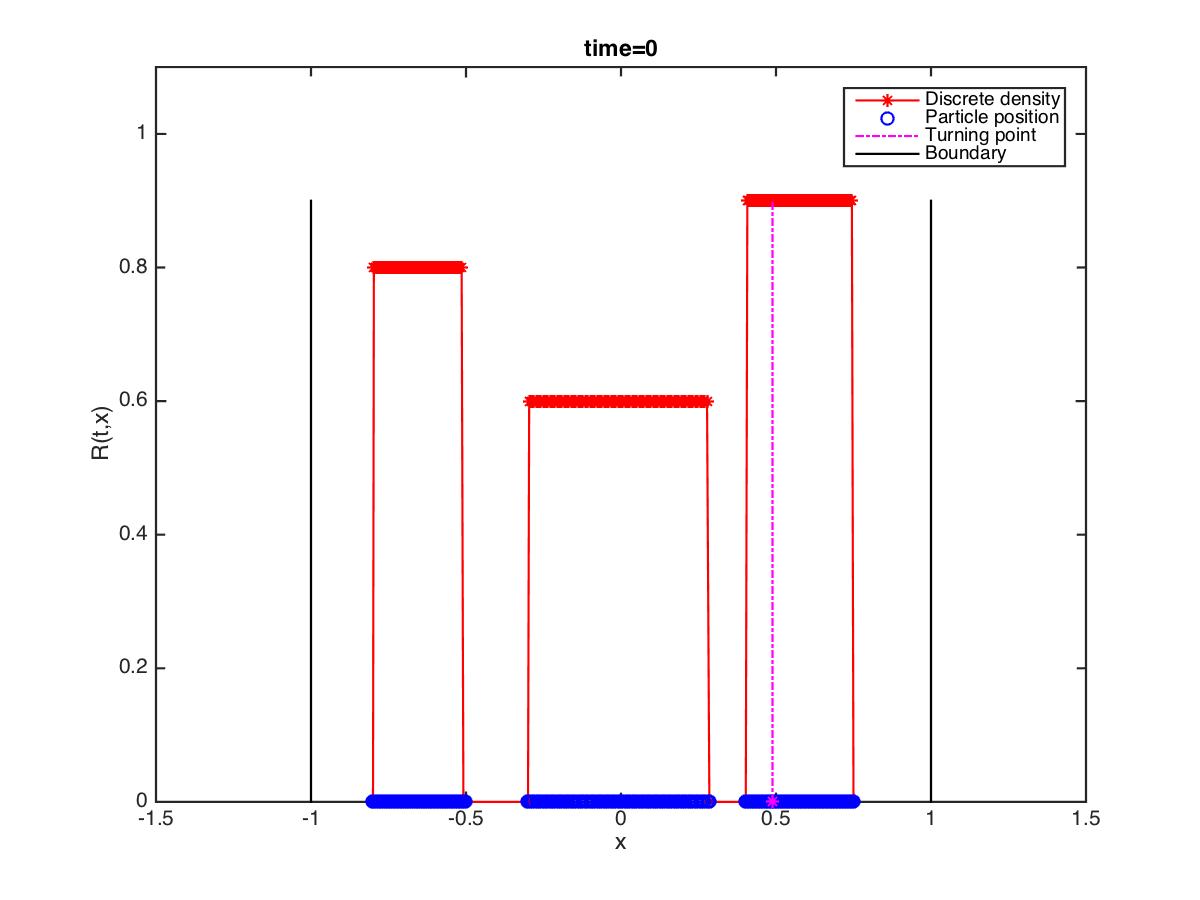}
\end{center}
\end{minipage}%
\begin{minipage}[c]{6cm}
\begin{center}
\includegraphics[width=6cm]{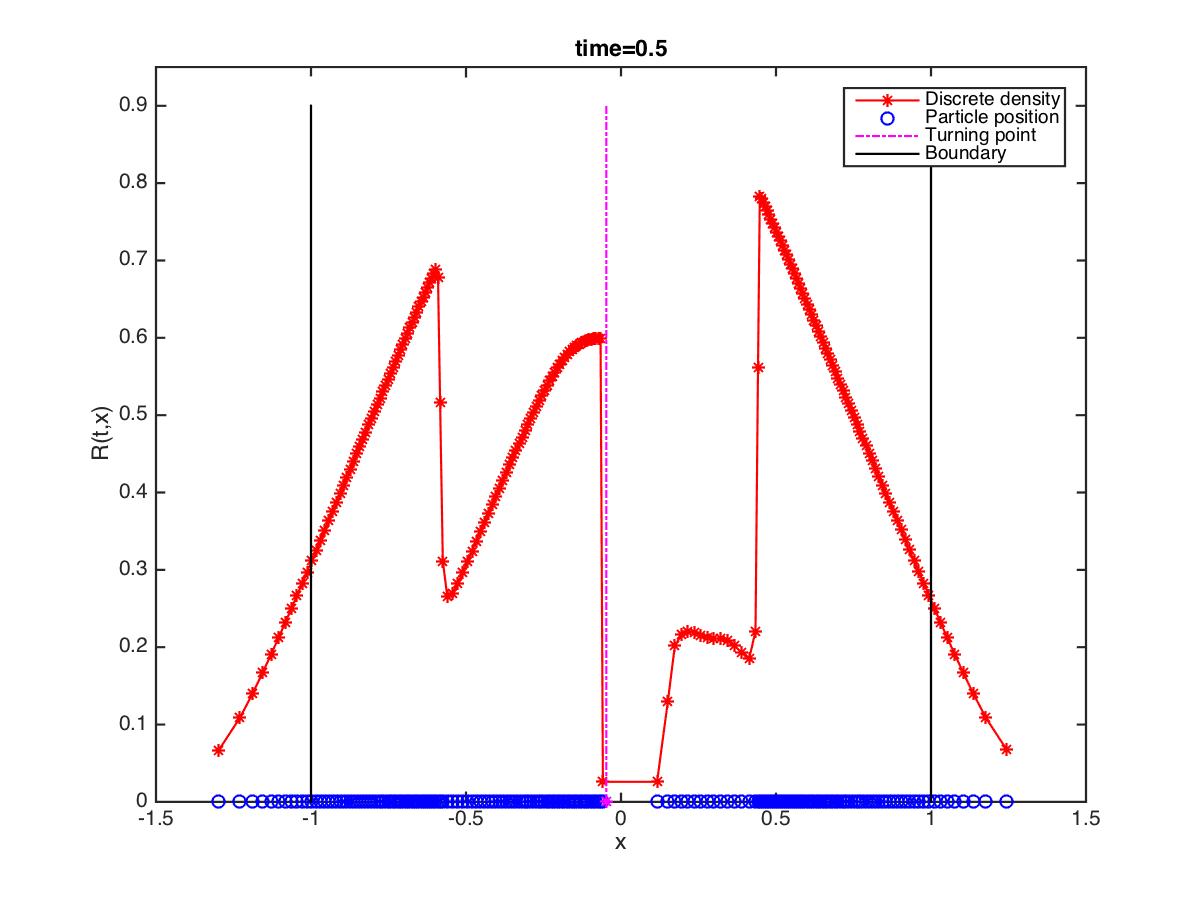}
\end{center}
\end{minipage}
\begin{minipage}[c]{6cm}
\begin{center}
\includegraphics[width=6cm]{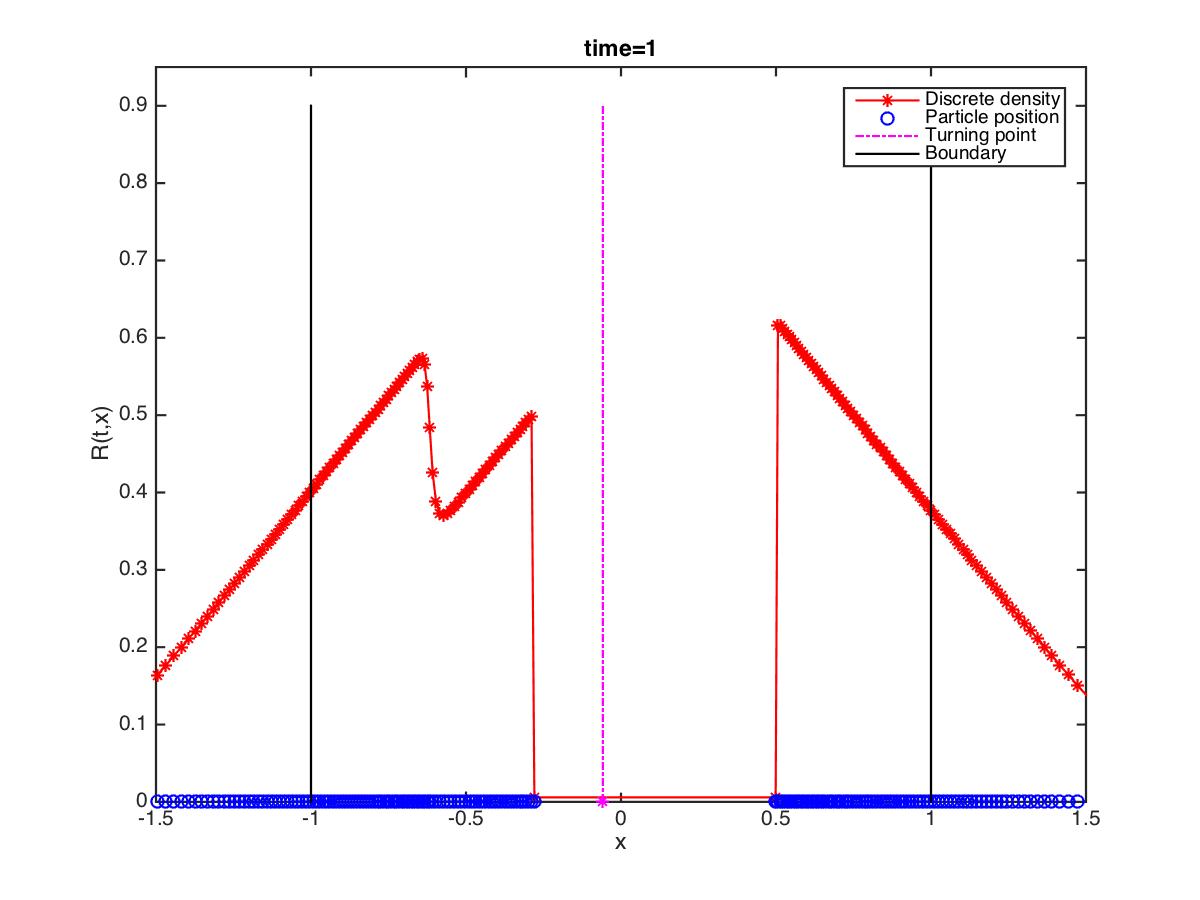}
\end{center}
\end{minipage}
\caption{Evolution of $R(t,x)$ with initial data $\bar{\rho}(x)$ given in \eqref{Steps}. \label{fig:Test 5}}
\end{figure}

\begin{figure}[htbp]
\begin{minipage}[l]{6cm}
\begin{center}
\includegraphics[width=6cm]{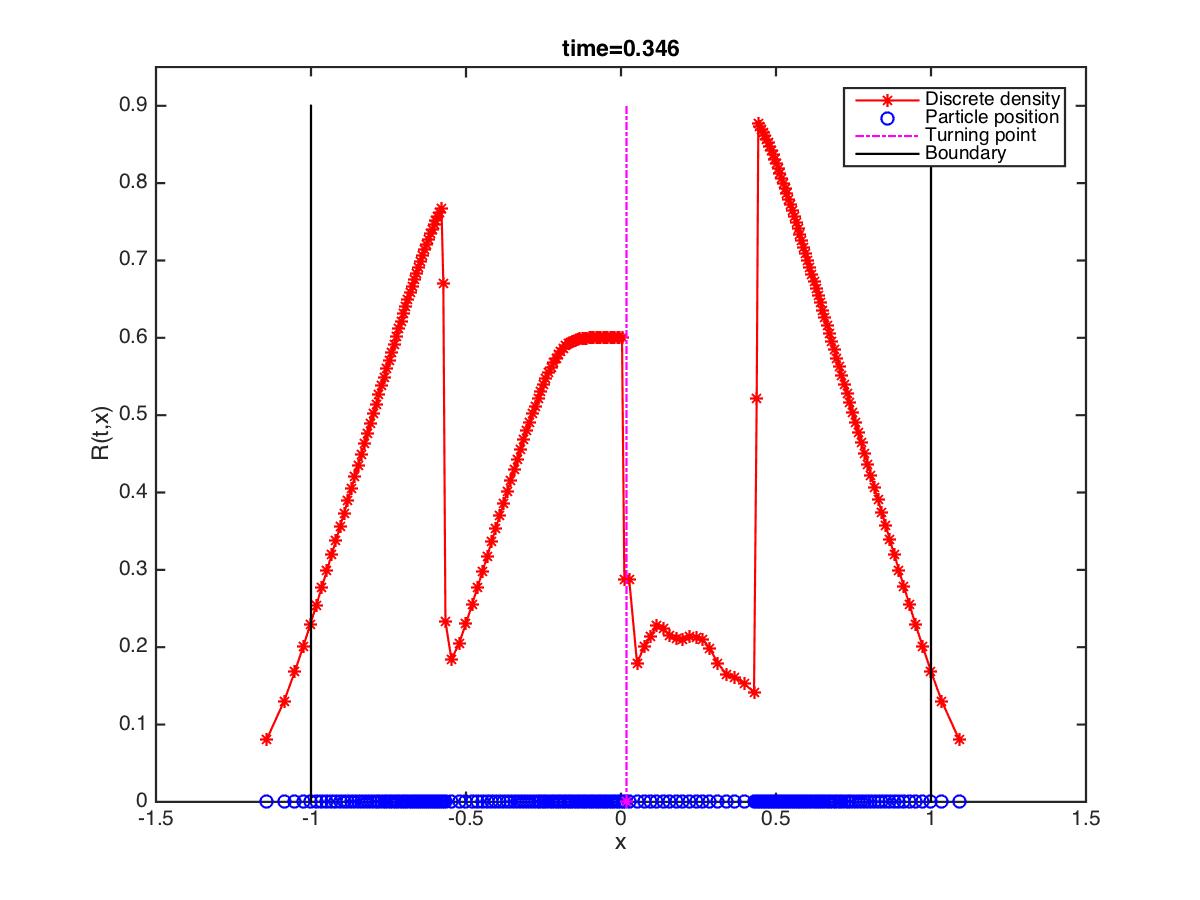}
\end{center}
\end{minipage}%
\begin{minipage}[c]{6cm}
\begin{center}
\includegraphics[width=6cm]{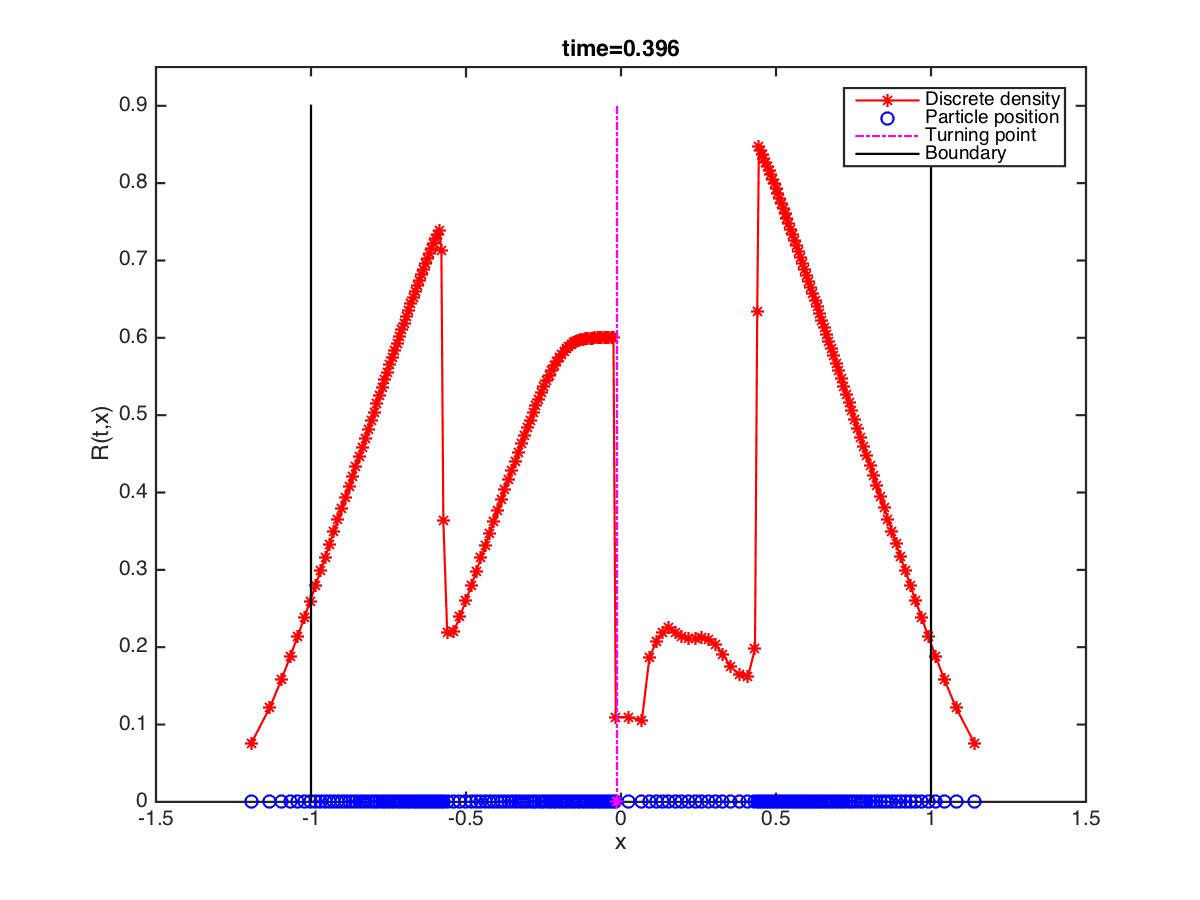}
\end{center}
\end{minipage}
\caption{Mass transfer across the turning point and \emph{non-classical shock} with initial data $\bar{\rho}$ given in \eqref{Steps}. \label{fig:Masstr}}
\end{figure}

In the literature, the model \eqref{eq:hughes_continuum} is usually solved in an iterative manner at each time step:
\begin{enumerate}
\item Given $\rho$, solve the eikonal equation.
\item Given the solution to the eikonal equation, solve the non-linear scalar conservation law using the Godunov, Lax-Friedrichs or ENO schemes (see \cite{DiFrancescoMarkowichPietschmannWolfram} and \cite{El-KhatibGoatinRosini}).
\end{enumerate}
A comparison betweeen the first-order particle method and a classical Godunov scheme is showed in \figurename~\ref{fig:Comp}, where we plot the solution of the previous simulations (at time $t=1$) both with the FTL-Hughes particle method and with the Godunov scheme. We set the spatial discretization according to the number of particles $N=200$ and the time discretization with $\Delta t=5\times10^{-2}$. Note that the time step is the same for both methods and is selected so as to respect the CFL condition for the Godunov method. Empirically, the observed time step restriction for the FTL-Hughes method \eqref{eq:discrete_Hughes} is much less severe than for the Godunov method applied to the Eulerian descriprion of the flow. However a detailed stability analysis of the method is beyond the scope of the present paper, and will be subject of future investigation.

It is evident from \figurename~\ref{fig:Comp2} that the two methods, though conceptually different, produce solutions which differ by a very small error. A more refined analysis of the error will be produced in a future work.

\begin{figure}[htbp]
\begin{minipage}[l]{6cm}
\begin{center}
\includegraphics[width=6cm]{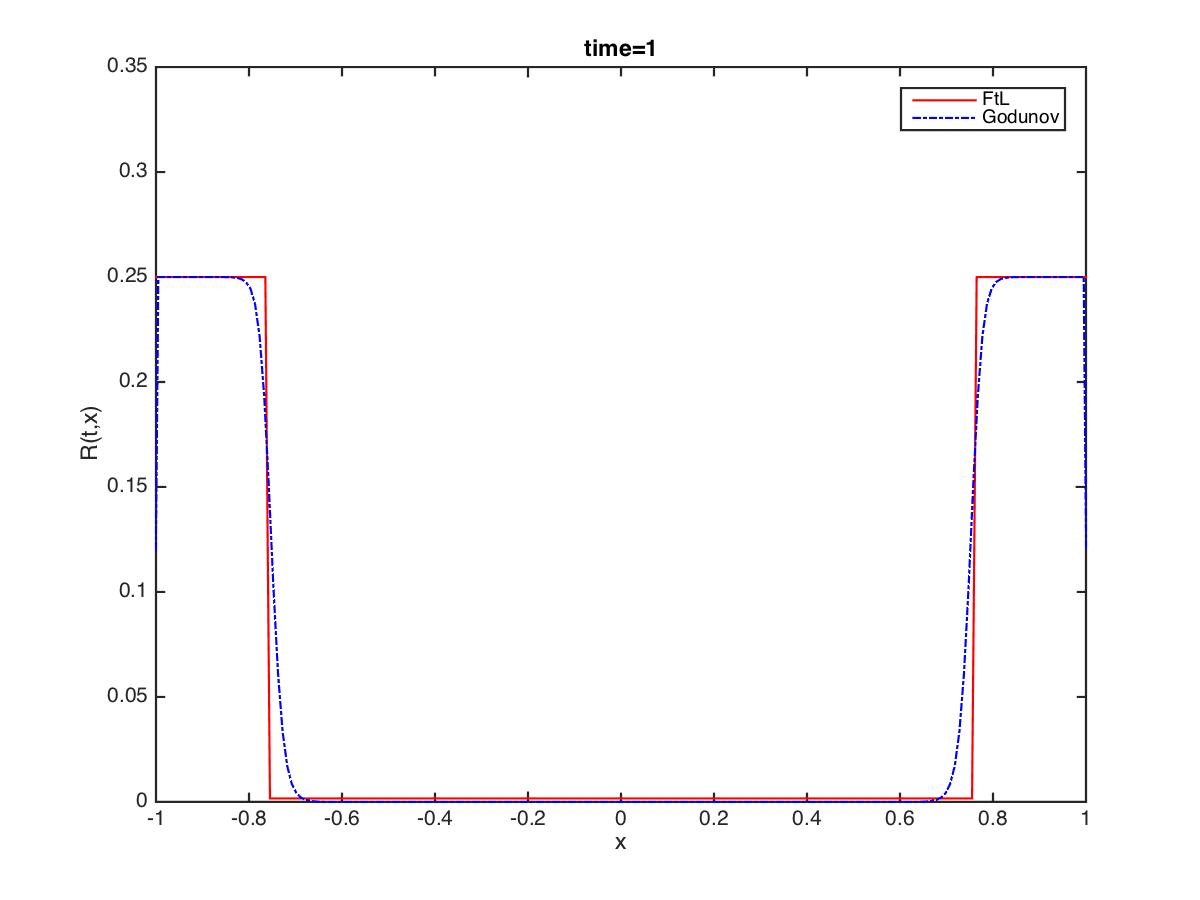}
\end{center}
\end{minipage}%
\begin{minipage}[c]{6cm}
\begin{center}
\includegraphics[width=6cm]{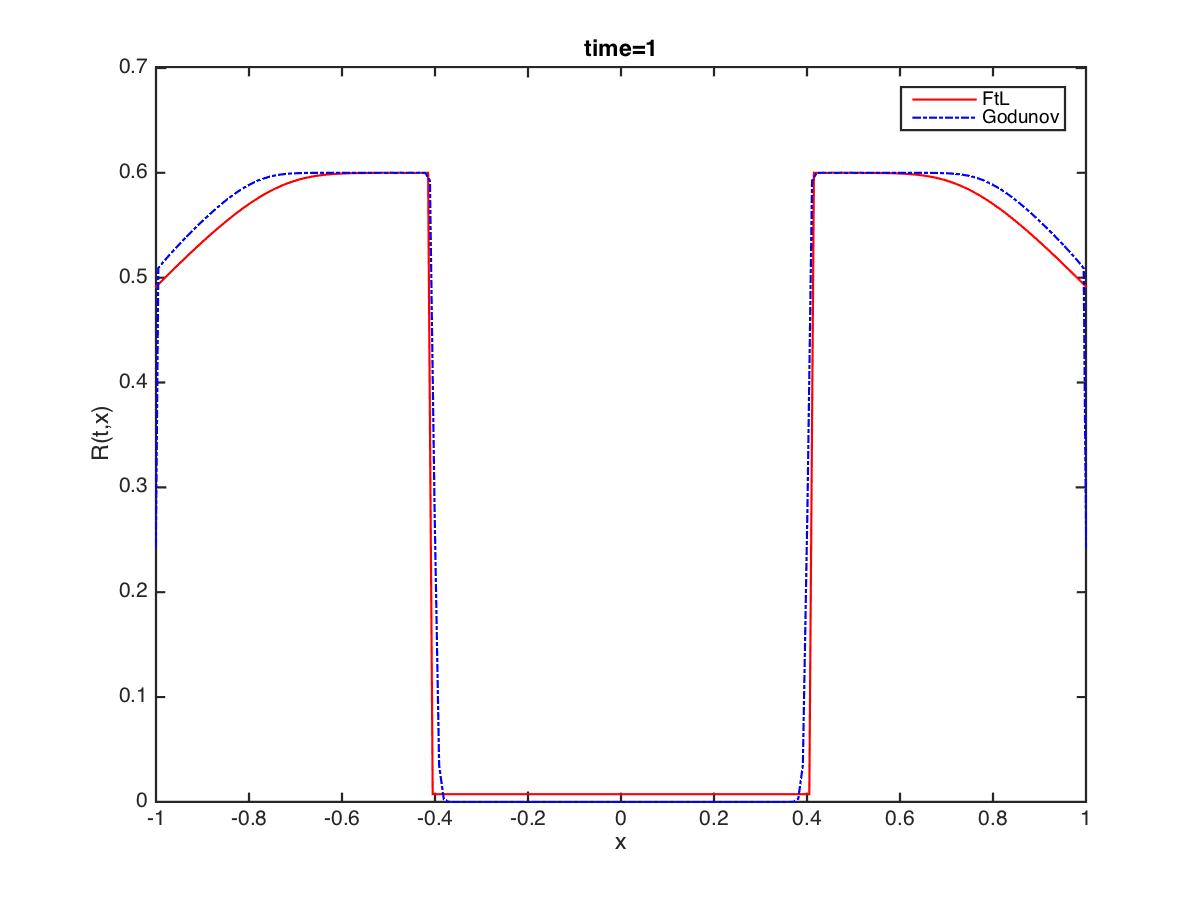}
\end{center}
\end{minipage}\\
\begin{minipage}[c]{6cm}
\begin{center}
\includegraphics[width=6cm]{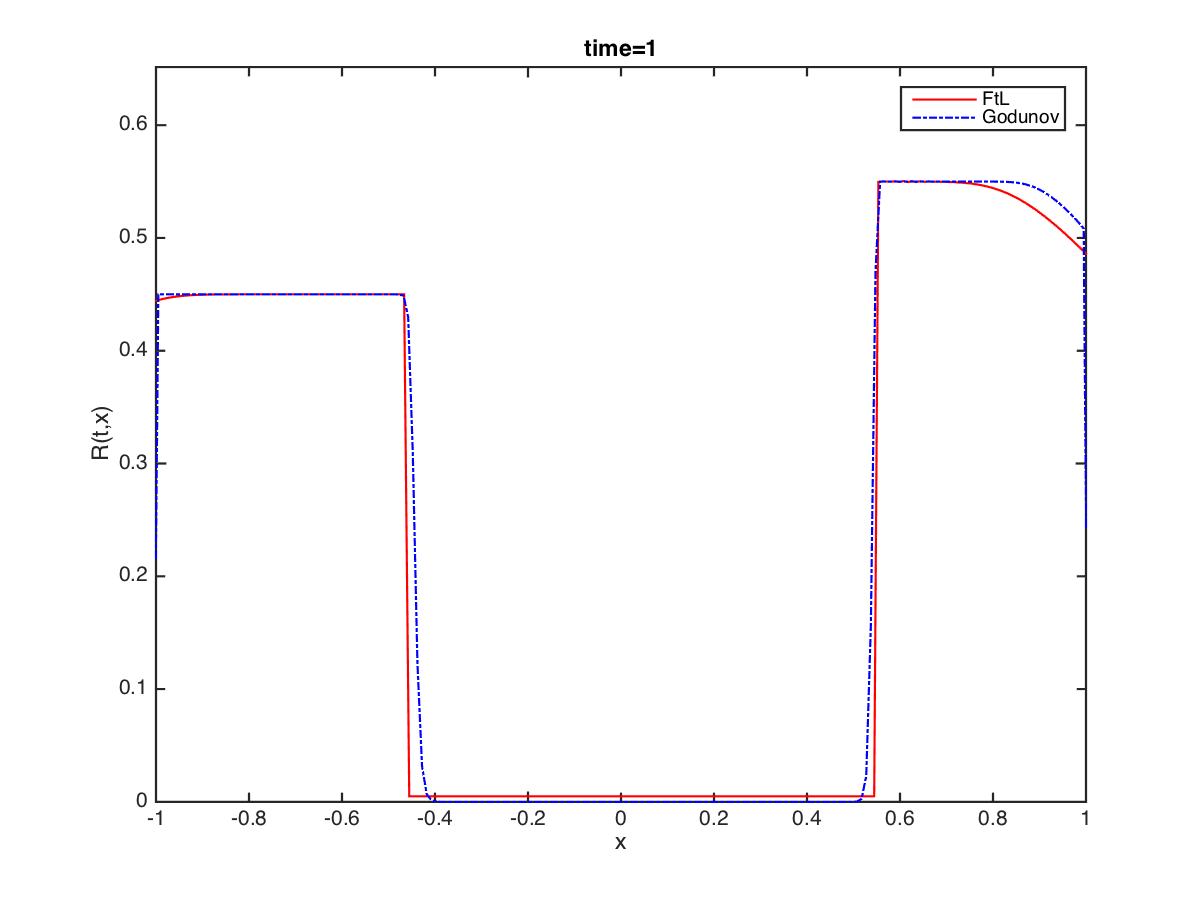}
\end{center}
\end{minipage}
\begin{minipage}[c]{6cm}
\begin{center}
\includegraphics[width=6cm]{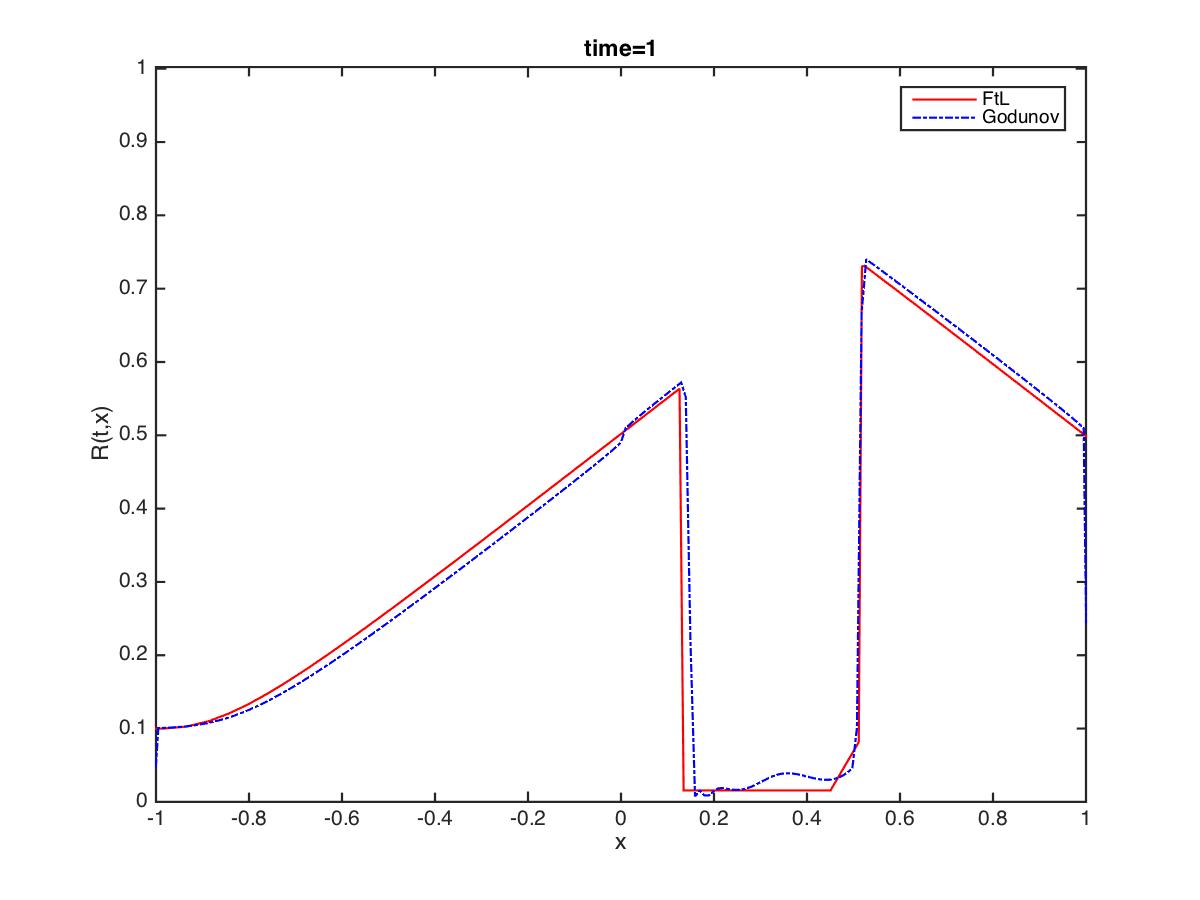}
\end{center}
\end{minipage}\\
\begin{minipage}[c]{6cm}
\begin{center}
\includegraphics[width=6cm]{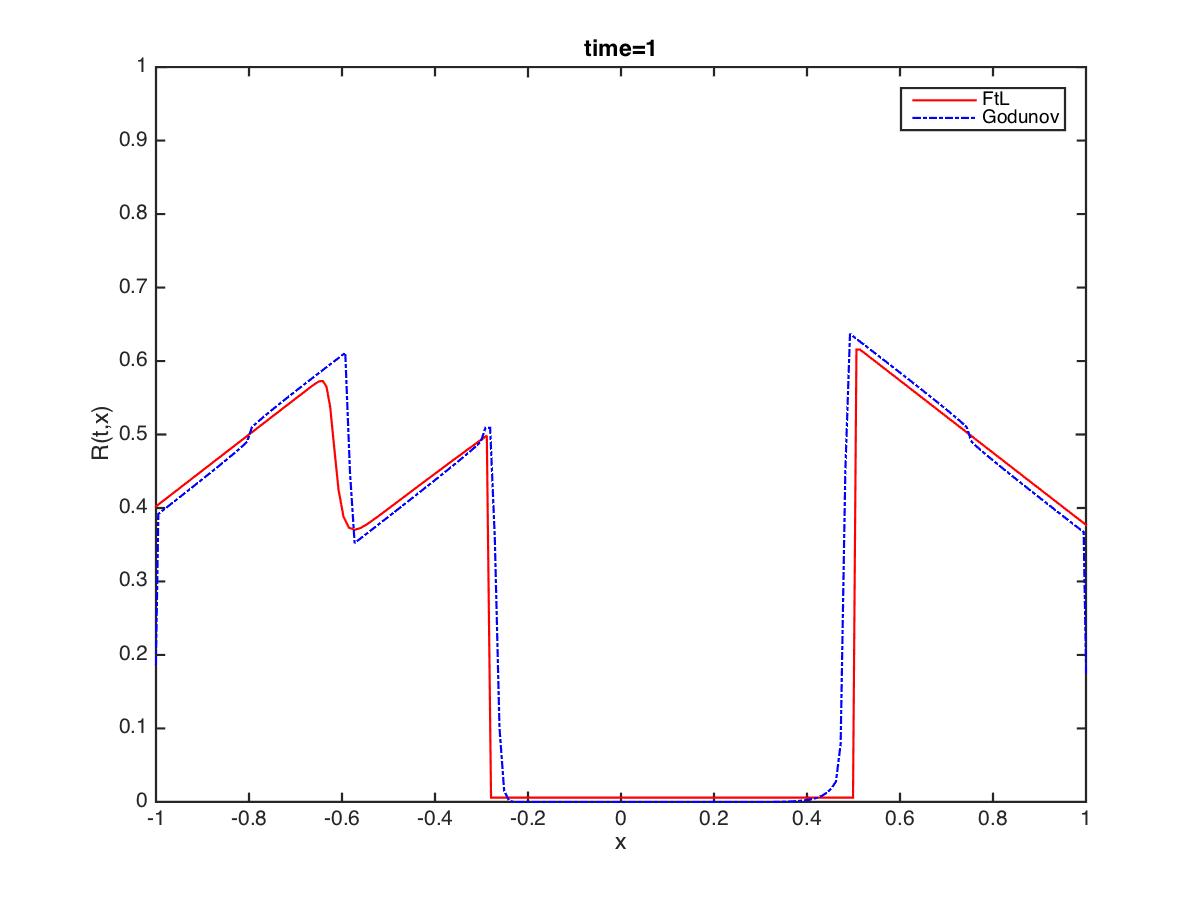}
\end{center}
\end{minipage}
\caption{Comparison between the Follow-the-Leader scheme (in red) and the Godunov scheme (in blu). \label{fig:Comp}}
\end{figure}

\begin{figure}[htbp]
\begin{center}
\includegraphics[width=8cm]{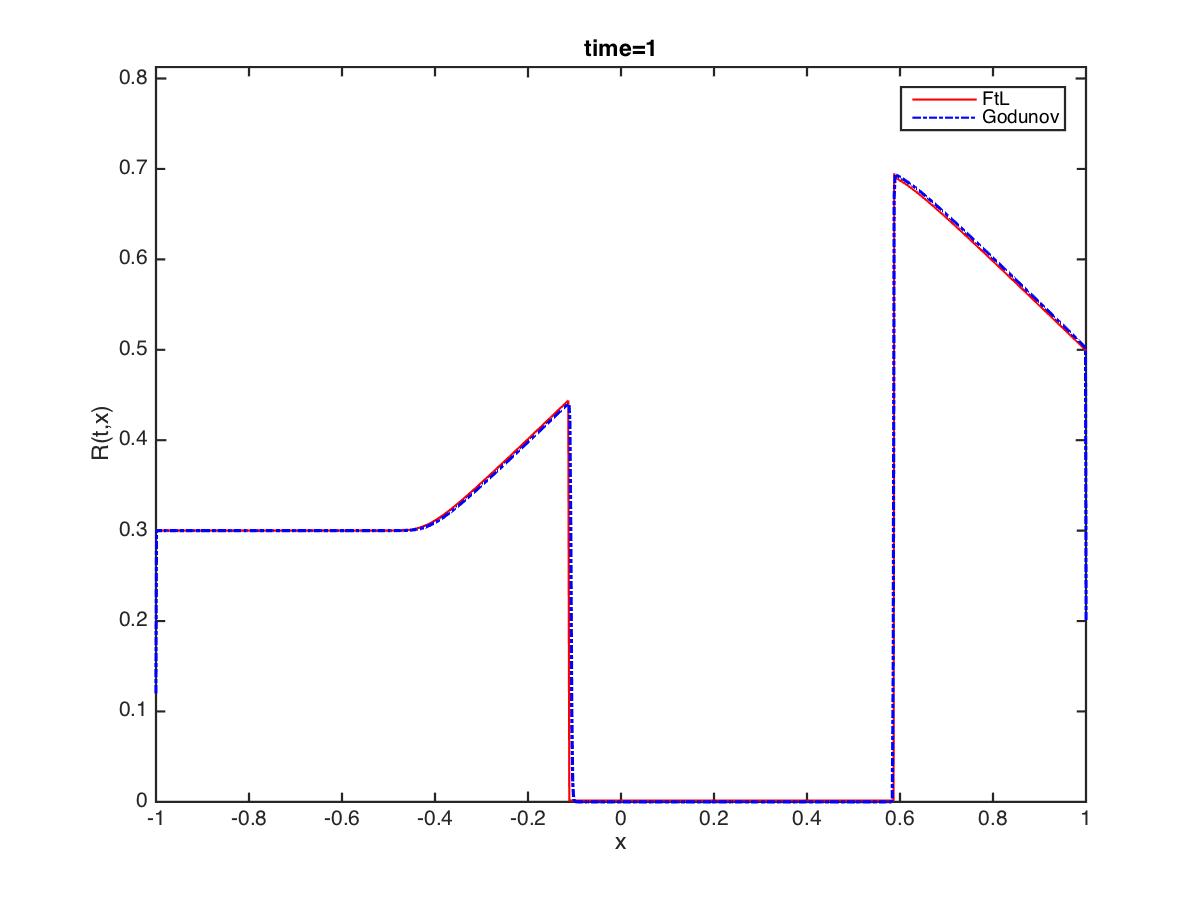}
\end{center}
\caption{Increasing the number of particles, and then the cells integration for the Godunov method, the agreement between the two methods greatly improves. Here we consider the initial datum $\bar{\rho}=0.3\times\,\mathbf{1}_{[-1,0]}+0.7\times \,\mathbf{1}_{(0,1]}$ and we set $N=1000$ with $1500$ time iterations. \label{fig:Comp2}}
\end{figure}

\section*{Conclusions}

The FTL-Hughes particle scheme introduced in section \ref{sec:particle} has been proven to rigorously converge to weak entropy solutions to Hughes' model in $1d$ with zero Dirichlet conditions \eqref{eq:model} in cases for which a vacuum region is generated around the turning curve $x=\xi(t)$ for all times, i.e.~ \emph{no mass transfer occurs }through the turning point $\xi(t)$. At the particle level, this means that particles initially split into two sets moving towards distinct directions, with \emph{no change of direction} for all times.

The numerical simulations in section \ref{sec:numerics} supported this result, and show evidence that the proposed particle method is consistent also in cases in which particles do cross the turning point thus switching direction. The observation of such a phenomenon motivates further studies on the rigorous analytical study of the particle method, in particular to show (for the first time) an existence result for \eqref{eq:model} with \emph{large initial data}. Even the global-in-time well-posedness of the particle scheme is difficult in such cases. Indeed, even in some simple symmetric cases the continuation of the particle scheme after collision with the turning point cannot be prescribed by actually crossing the turning point itself. The authors will continue the study of this problem in a future work.

\section*{Acknowledgments}
MDF is grateful to Peter A. Markowich (to whom this paper is dedicated) for having introduced him to the Hughes model for pedestrian movements in 2009. MDF and MDR are supported by the GNAMPA (Italian group of Analysis, Probability, and Applications) project \emph{Geometric and qualitative properties of solutions to elliptic and parabolic equations}. SF is supported by the GNAMPA (Italian group of Analysis, Probability, and Applications) project \emph{Analisi e stabilit\`a per modelli di equazioni alle derivate parziali nella matematica applicata}. GR was partially supported by ITN-ETN Marie Curie Actions ModCompShock - `Modelling and Computation of Shocks and Interfaces'. MDR acknowledges hospitality by the Gran Sasso Science Institute in L'Aquila in a period in which this manuscript was being written.

\medskip
Received xxxx 20xx; revised xxxx 20xx.
\medskip

\end{document}